\numberwithin{equation}{section}
\theoremstyle{plain}
\newtheorem{theor10}{Theorem}
\newtheorem{prop10}[theor10]{Proposition}
\newtheorem{cor10}[theor10]{Corollary}
\newtheorem{lem10}[theor10]{Lemma}
\newtheorem{theor0}{Theorem}[section]
\newenvironment{theor}
  {\pushQED{\qed}\begin{theor0}}
  {\popQED\end{theor0}}
\newtheorem{lem0}[theor0]{Lemma}
\newenvironment{lem}
  {\pushQED{\qed}\begin{lem0}}
  {\popQED\end{lem0}}
\newtheorem{prop0}[theor0]{Proposition}
\newenvironment{prop}
  {\pushQED{\qed}\begin{prop0}}
  {\popQED\end{prop0}}
\newtheorem{cor0}[theor0]{Corollary}
\theoremstyle{definition}
\newtheorem{rems0}[theor0]{Remarks}
\newtheorem{rem0}[theor0]{Remark}
\newenvironment{rem}
  {\pushQED{\qed}\begin{rem0}}
  {\popQED\end{rem0}}
 \newtheorem{hypo0}[theor0]{Hypothesis}
\newenvironment{hypo}
  {\pushQED{\qed}\begin{hypo0}}
  {\popQED\end{hypo0}}
\theoremstyle{plain}
\newtheorem{as0}[theor0]{Assumption}
\newtheorem*{asn0*}{\assumptionnumber}
  \providecommand{\assumptionnumber}{}
\newenvironment{asn0}[2]
   {\renewcommand{\assumptionnumber}{Assumption \!#1 {\normalfont--- #2}}
    \begin{asn0*}
    \protected@edef\@currentlabel{{\normalfont#1}}}
   {\end{asn0*}}
\newenvironment{asn01}[1]
   {\renewcommand{\assumptionnumber}{Assumption \!#1}
    \begin{asn0*}
    \protected@edef\@currentlabel{{\normalfont#1}}}
   {\end{asn0*}}
\newcommand{\N}{\mathbb N}
\newcommand{\e}{\varepsilon}
\newcommand{\calQ}{{\mathcal Q}}
\newcommand{\calC}{\mathcal C}
\newcommand{\Nc}{\mathcal N}
\newcommand{\mone}{{\mathds{1}}}
\newcommand{\R}{\mathbb R}
\newcommand{\Z}{\mathbb Z}
\newcommand{\loc}{{\operatorname{loc}}}
\newcommand{\Id}{\operatorname{Id}}
\newcommand{\step}[1]{\noindent \textit{Step} #1.}
\newcommand{\expec}[1]{\mathbb{E}\left[ #1 \right]}
\newcommand{\var}[1]{\mathrm{Var}\left[#1\right]}
\newcommand{\cov}[2]{\mathrm{Cov}\left[#1;#2\right]}
\newcommand{\dTV}[2]{d_{TV}\left(#1,#2\right)}
\newcommand{\ent}[1]{\mathrm{Ent}\!\left[#1\right]}
\newcommand{\3}{\operatorname{|\!|\!|}}
\newcommand{\sign}[1]{\operatorname{sign} \,#1}
\title[Quantitative homogenization for log-normal coefficients in dimension one]{Quantitative homogenization for log-normal coefficients via Malliavin calculus:
the one-dimensional case}
\author[A. Gloria]{Antoine Gloria}
\address[Antoine Gloria]{Sorbonne Universit\'e, CNRS, Universit\'e de Paris, Laboratoire Jacques-Louis Lions, 75005~Paris, France \& Institut Universitaire de France \& Universit\'e Libre de Bruxelles, D\'epartement de Math\'ematique, 1050~Brussels, Belgium}
\email{antoine.gloria@sorbonne-universite.fr}
\author[S. Qi]{Siguang Qi}
\address[Siguang Qi]{Sorbonne Universit\'e, CNRS, Universit\'e de Paris, Laboratoire Jacques-Louis Lions, 75005~Paris, France }
\email{siguang.qi@sorbonne-universite.fr}
\begin{document}
\selectlanguage{english}

\maketitle

\begin{abstract}
The quantitative analysis of stochastic homogenization problems has been a very active field
in the last fifteen years.  Whereas the first results were motivated by applied questions (namely, the numerical approximation of homogenized coefficients), the more recent  achievements in the field are much more analytically-driven and focus on the subtle interplay between PDE analysis (and in particular elliptic regularity theory) and probability (concentration, stochastic cancellations, scaling limits). 
The aim of this article is threefold. 
First we provide a complete and self-contained analysis for the popular example of log-normal coefficients with possibly fat tails in dimension $d=1$, establishing new results on the accuracy of the two-scale expansion and characterizing fluctuations (in the perspective of uncertainty quantification). Second, we work in a context where explicit formulas allow us to by-pass analytical difficulties and therefore mostly focus on the probabilistic side of the theory.
Last, the one-dimensional setting gives intuition on the available results in higher dimension (provided results are correctly reformulated) to which we give precise entries to the recent literature. 

\bigskip\noindent
{\sc MSC-class:} 35R60, 35B27, 35B65, 60H07
%PDEs with randomness, stochastic partial differential equations , Homogenization in context of PDEs, Central limit and other weak theorems, Stochastic calculus of variations and the Malliavin calculus

\end{abstract}

\setcounter{tocdepth}{1}
\tableofcontents

\section{Introduction}

\subsection{Position of the problem}

Consider for a moment a random checkerboard $a:\R\to \{1,2\}$, namely, a random field on $\R$ that is piecewise constant on intervals of the form $[z,z+1)$, $z\in \Z$,
and the values of which are picked independently on each interval by tossing identical unbiased coins (head is $1$, tail is $2$).
Let $f \in C^1([0,1])$, and for all $\e >0$, consider the equation posed on $(0,1)$
\begin{equation}\label{0:1D-1}
(a(\frac x  \e)u_\e'(x))'=f'(x), \quad u_\e(0)=0, \quad u_\e(1)=0.
\end{equation}
The aim of stochastic homogenization is to describe the asymptotic behavior of the unique solution $u_\e \in C^1([0,1])$ of \eqref{0:1D-1} when $\e$ gets small.

\medskip

A direct integration of \eqref{0:1D-1} (see \eqref{0:1D-2}  below) allows to express $u_\e$ in terms of
integrals of $\frac 1a$. In order to motivate the questions we shall address, let us consider 
such integrals of random fields $a$ itself. The law of large numbers ensures
that almost surely
\begin{equation}\label{0:baby1}
\lim_{N\uparrow \infty} \frac1N \int_0^N a(x)dx \,=\, \expec{\int_0^1a}=\frac12(1+2)=\frac32.
\end{equation}
This proves that the random field $\frac1N \int_0^N a(x)dx$ has a deterministic limit. 
At which speed does this convergence take place? 
This question turns out to be twofold. Since $\frac1N \int_0^N a(x)dx$ is random,  $\frac1N \int_0^N a(x)dx$ fluctuates
around its expectation $\expec{\frac1N \int_0^N a(x)dx}$. What \eqref{0:baby1} shows is that $\expec{\frac1N \int_0^N a(x)dx}$ converges
to $\frac32$ as $N\uparrow \infty$ and that the fluctuations of $\frac1N \int_0^N a(x)dx-\expec{\frac1N \int_0^N a(x)dx}$ vanish as $N\uparrow \infty$. To understand the speed of 
convergence we thus need to establish the convergence of the expectation (this amounts to establishing an error estimate
for the convergence of a deterministic sequence: the sequence of expectations), and to control the fluctuations.
In this simple example, we have $\expec{\frac1N \int_0^N a(x)dx}=\expec{\int_0^1a}=\frac32$, so that there is 
no ``systematic error'' (that is, the sequence of expectations is constant).
However, we do have fluctuations, and therefore a ``random error''.
A direct calculation using the independence of the values of $a$ on unit intervals shows that
\begin{eqnarray*}
\lefteqn{\var{ \frac1N \int_0^N a(x)dx}}
\\
&=& \expec{\Big(\frac1N \int_0^N a(x)dx-\frac32\Big)^2}= \frac1{N^2}\int_0^N\int_0^N \expec{(a(x)-\frac32)(a(y)-\frac32)}dxdy
\\ &=& \frac1N \expec{\frac1N \int_0^N {(a(x)-\frac32)^2dx}} 
\,=\, \frac1N \frac14.
\end{eqnarray*}
Hence one expects $ \frac1{\sqrt{N}} \int_0^N (a(x)-\frac32)dx$ (note the scaling factor $\frac1{\sqrt N}$ instead of $\frac1N$) to be of order $1$ with high probability. 
How can we make this statement precise?
A possible way is to investigate whether the sequence of random variables $\frac1{\sqrt{N}} \int_0^N (a(x)-\frac32)dx$ admits a limiting law as $N\uparrow \infty$ (or in other words whether the distribution of the random variable $\frac1{\sqrt{N}} \int_0^N (a(x)-\frac32)dx$ converges towards a limiting distribution).
In our setting of a simple random checkerboard (that reduces to tossing coins), the answer is given by the central limit theorem
which ensures that for large $N$, the law of $\frac1{\sqrt{N}} \int_0^N (a(x)-\frac32)dx$ is close to the law of a Gaussian of mean $0$ and  variance $\frac14$ --- which
we call a centered normal random variable of variance $\frac14$.

\medskip

In stochastic homogenization, $x\mapsto u'_\e( \e x)$ plays a similar role as $a$, and $\frac1\e$ the role of  $N$.
The first question is whether spatial averages of $u'_\e$ become deterministic as $\e \downarrow 0$, which constitutes a qualitative ergodic theorem.
This requires to identify the limit, which we shall call $\bar u'$.
The second question is whether this convergence can be quantified, both in terms of systematic and random errors. And finally, 
the third question concerns the validity of a central limit theorem for averages of $u'_\e$.
Answering these questions is the aim of the quantitative theory of stochastic homogenization.

\subsection{Aim and outline of the article}

The quantitative analysis of stochastic homogenization problems has been a very active field
in the last fifteen years.  Whereas the first results on weakly correlated fields (such as the random checkerboard or Poisson random inclusions) \cite{GO1,GO2,Gloria-Otto-10b,GNO1,GNO2} were motivated by applied questions (namely, the numerical approximation of homogenized coefficients), the more recent and powerful achievements in the field are much more analytically-driven and focus on the subtle interplay between PDE analysis (and in particular elliptic regularity theory) and probability (concentration, stochastic cancellations, scaling limits) --- see \cite{DG1,DG2,GNO-reg,GNO-quant,DGO1,DGO2,DO-20,duerinckx2019scaling} for approaches based on functional calculus and nonlinear concentration of measure, and \cite{AS,Armstrong-Mourrat-16,AKM1,GO4,AKM2,AKM-book} for approaches based on renormalization and linear concentration of measure.
The quantitative stochastic homogenization theory --- with its many probabilistic refinements --- has drifted away from applications, and somehow eludes the PDE community of homogenization.

\medskip

This article is an attempt to treat a problem of practical interest (namely, the popular log-normal coefficients used in geophysics -- e.g.~\cite{MR1455261} and the references therein) in a way that allows one to   focus on probabilistic aspects of the theory in the simplest context possible.
To this aim we propose to follow an unusual path, and split this contribution into three parts.
In the first part (Section~\ref{sec:Malliavin}), we introduce the most versatile probabilistic tool of quantitative stochastic homogenization from a practical viewpoint: Malliavin and Stein-Malliavin calculus. In the second part, we focus on the one-dimensional setting, for which we have explicit formulas (Section~\ref{sec:qual}). This allows us to bypass PDE analysis, and use Malliavin calculus on concrete objects. We do this for coefficients that follow  a log-normal law (with possibly fat tails), and display a complete analysis, characterizing both the oscillations (Section~\ref{sec:2s}) and the fluctuations (Section~\ref{sec:law}) of the solutions.  
The use of explicit formulas is obviously limited to dimension $d=1$ and the proofs we present cannot extend to higher dimension.  The results we prove, however, are representative of what happens in higher dimension when carefully reformulated. 
In the third part of this article, we therefore revisit these one-dimensional results with the crucial concepts of homogenization in mind (such as corrector, two-scale expansion, homogenization commutator, homogenized coefficients, and fluctuation tensor, that we precisely define),  in a way that can be extended to higher dimension (for which we give precise entries to the literature).

\section{Malliavin calculus in practice}\label{sec:Malliavin}

In this short section, we provide the tools we shall borrow from Malliavin calculus.
We focus the presentation on the practice: Malliavin calculus is first of all a calculus, which allows to prove actual estimates. In particular, 
we deliberately ignore the functional-analytic setting. We refer the reader to \cite{duerinckx2019scaling} 
for a gentle introduction of this functional-analytic setting and to precise references to 
standard monographs on Malliavin calculus. In the following, we favor simplicity to generality.
Let $G:\R \to \R$ be a stationary Gaussian random field of mean zero with covariance function $\calC:\R\to \R$ (which we assume to be H\"older continuous at zero) that satisfies for some $\beta>0$ and all $x\in \R$
$$
|\calC(x)| \lesssim (1+|x|)^{-\beta}.
$$
Recall that a Gaussian random variable is characterized by its mean and its variance.
In particular, the above means that for all $x \in \R$, $G(x)$ is a centered Gaussian random variable of variance $\calC(0)$, and that for all $x,y  \in \R$, $G(x)$ and $G(y)$ are two Gaussian random variables whose covariance $\expec{G(x)G(y)}$ is given by $\calC(x-y)$ (if $\calC(x-y)\ne 0$, $G(x)$ and $G(y)$ are not independent).
We further assume that $\calC = \calC^\circ \star \calC^\circ$ for some $\calC^\circ$ satisfying for all $x\in \R$
\begin{equation} \label{e.decay-Co}
\sup_{B(x)} |\calC^\circ | \,\lesssim \, (1+|x|)^{-1-\frac \beta 2}\times
\left\{
\begin{array}{lll}
\log (2+|x|)^{-\frac12}&:&\beta=1,\\
1&:&\beta\ne 1.
\end{array}
\right.
\end{equation}
(While these assumptions could be weakened, they are rather general and yield substantial simplifications for our purposes.)

The Gaussian field $G$ will play the role of what one usually abstractly writes ``$\omega$'' in stochastic homogenization. In particular, a random variable will be nothing but a measurable map $X$ of $G$ (let us be loose on measurability and functional analysis). 
Since $G$ is a Gaussian field, for all compactly supported continuous functions $h$, the random variable $X_h(G):=\int_{\R} h(x)G(x)dx$
is itself Gaussian, and thus characterized by its mean $\expec{X_h}=0$ and its variance 
$\var{X_h}=\expec{X_h^2}:=\iint_{\R\times \R}h(x)\calC(x-y)h(y)dxdy$. 
In what follows, we thus abusively use the notation $\Omega$ for the set of realizations
of the Gaussian field $G$, and shall use the suggestive notation $G$ instead of the classical notation $\omega$.

\subsection{Stationarity and the ergodic theorem}\label{sec:erg-th}

We start with the definition of stationarity.
We say that a random field $F:\R \times \Omega \to \R$ (that is, a function  both of space and of the Gaussian field) is stationary if 
for all $x,z\in \R$ and $G\in \Omega$ we have $F(x+z,G)=F(x,G(\cdot+z))$ (that is, translating in space amounts to translating the Gaussian field -- the Gaussian field is defined on $\R$).
Since 
\[
\expec{F(x)}:=\int_\R F(x,t) g(t)dt,
\] 
where $g(t):=\frac1{\sqrt{2\pi \calC(0)}} e^{-\frac{t^2}{2\calC(0)}}$, 
and since $G(\cdot+z)$ and $G$ have the same density $g$, 
the expectation $\expec{F(x)}$ of $F(x)$ does not depend on the point $x$ -- it is denoted by $\expec{F}$.
If $F \in L^1_\loc(\R,L^1(\Omega))$, then for all $h\in L^\infty(\R)$ with compact support,
we have for almost every $G$
\begin{equation*}
\lim_{\e \downarrow 0} \int_{\R} F(\tfrac x\e,G) h \,=\,  \expec{F} \int_{\R} h,
\end{equation*}
which we refer to as the ergodic theorem.
Of course this convergence (which actually only requires the ergodicity of the probability measure) will be very much improved in our Gaussian setting. Note that the ergodic theorem is more often presented in the form
\[
\lim_{R \uparrow +\infty} \fint_{-R}^R F(x,G) dx \,=\,  \expec{F},
\]
which is completely equivalent (by a change of variable, and an approximation argument).

\subsection{The Malliavin derivative}

Let $X$ be a random variable. 
For all $x\in \R^d$, we define 
\begin{equation}\label{e.Mallder}
D_x X(G)\,:=\, \frac{\partial X(G)}{\partial G(x)},
\end{equation}
where $\frac{\partial }{\partial G(x)}$ is the standard derivative with respect to the value of the Gaussian field at $x$.
Let us give an example, and consider the random variable $X_h(G):=\int_{\R} h(x)G(x)dx$ for some smooth compactly supported function $h$. Then 
\[
D_z X_h(G)=\int_{\R} h(x)\frac{\partial G(x)}{\partial G(z)}dx=\int_{\R} h(x)\delta(x-z)dx=h(z).
\]
The Malliavin derivative $DX(G)$ is the collection  $\{D_x X(G)\}_{x\in \R}$ (it is well-defined as an element of a suitable Malliavin-Sobolev space constructed using the covariance function $\calC$).
Likewise, for all $x,x'\in \R^d$, we define 
$$
D^2_{x,x'} X(G)\,: = \, \frac{\partial^2 X(G)}{\partial G(x)\partial G(x')},
$$
where $\frac{\partial^2}{\partial G(x)\partial G(x')}$ is the standard second derivative with respect to the value of the Gaussian field at $x$ and $x'$. In our linear example of $X_h(G):=\int_{\R} h(x)G(x)dx$, we obviously have 
\[
D^2_{z,z'} X_h(G)=D_{z} h(z')= 0
\]
(the object $h(z')$ is deterministic and does not depend on $G$).
Likewise, the second Malliavin derivative $D^2X(G)$  is the collection  $\{D^2_{x,x'} X(G)\}_{x,x'\in \R}$ (it is well-defined as an element of a suitable Malliavin-Sobolev space).

\subsection{Control of the distance to a constant: Moment bounds}
The ergodic theorem shows that the fluctuations of a sequence of random variables (averages on larger and larger sets) vanish at the limit. To quantify fluctuations, we can consider variances (or more generally, $L^p$ norms of the random variables minus their expectation). This is what Malliavin calculus can do for us.
To this aim, we first introduce a suitable norm on maps $\R \to \R$ which only depends on the exponent $\beta$ of the decay of $\calC^\circ$: For all functions $f:\R\to \R$, we set
\begin{equation}\label{e.Mallnorm}
\|f\|_\beta  := \begin{cases}
\|f\|_{ L^\frac{2}{2-\beta}(\R)},&\text{if $0<\beta<1$};\\
\|\log(2+|\cdot|)^\frac12f\|_{ L^{2}(\R)},&\text{if $\beta=1$};\\
\|f\|_{ L^2(\R)},&\text{if $\beta>1$}.
\end{cases}
\end{equation}
For $\beta >1$, we denote $\|\cdot\|_\beta$ by $\|\cdot\|_{1+}$.
(The definitions of these norms are specific to dimension $d=1$, see \cite{duerinckx2019scaling} in  higher dimension.)
The first-order Malliavin derivative can be used to control how far a random variable $X$ is from its expectation $\expec{X}$ (basically, a Poincar\'e inequality in the ``infinite-dimensional'' probability space). 
The strongest such control is given by the following logarithmic-Sobolev inequality (LSI) for the Malliavin derivative: For all random variables $X$, we have
\begin{equation}\label{e.Mal:LSI}
\ent{X^2}:=\expec{X^2\log\frac{X^2}{\expec{X^2}}}\lesssim \,\expec{\|DX\|_\beta^2}.
\end{equation}
We shall not use (LSI) in the form of \eqref{e.Mal:LSI}, but rather in form of the moment bounds it implies: For all $p \ge 1$, 
\begin{equation}\label{e.Mal:moment}
\expec{(X-\expec{X})^{2p}}^\frac1{2p} \lesssim \sqrt p \, \expec{\|DX\|_\beta^{2p}}^\frac{1}{2p}.
\end{equation}
This is what is called a Poincar\'e inequality in $W^{1,p}(\Omega)$.

\subsection{Formula for covariances}

Whereas \eqref{e.Mal:moment} provides a control of the variance from above, it does not allow to control it from below (which is necessary to prove that the limiting variance is non zero in scaling limits -- we will define these words later). To this aim, we recall an identity for the covariance of two random variables $X$ and $Y$, known as the Helffer-Sj\"ostrand representation formula. Of course, since this is an identity, it makes use of the whole covariance function $\mathcal C$, and not only of the value of $\beta$.
We have
\begin{equation}
\cov{X}{Y}\,=\,  \expec{\iint_{\R \times \R} D_xX \, \mathcal C(x-y) (1+\mathcal L)^{-1} D_yYdxdy},
\end{equation}
where $(1+\mathcal L)^{-1} $ is a self-adjoint linear operator\footnote{The operator $\mathcal L$ is the Ornstein-Uhlenbeek operator $D^* D$, where $D^*$ is the adjoint of $D$, for which we need to be precise on functional spaces -- this is not important for the estimates} with bounded norm in the sense that for all $X$ we have
\begin{equation*}
\expec{\|(1+\mathcal L)^{-1} DX\|_{\beta}^2} \,\le\, \expec{\|  DX\|_{\beta}^2} .
\end{equation*}
Here again, what is important is that the formula (i.e. the identity) exists, and that the abstract operator $(1+\mathcal L)^{-1}$ can be removed in actual estimates.

\subsection{Control of the distance to a normal random variable}

A chaos expansion for a random variable is like a Taylor expansion for a function.
At first order, we compare a random variable to a constant.
At second order, we compare a random variable to a normal random variable (that is, with a Gaussian distribution), etc.
When the law of a sequence of random variables converges to a normal law (which means that the fluctuations tend to become Gaussian), we say that the sequence of random variables is asymptotically normal.
Like weak convergence, convergence in law is metrizable and 
the second Malliavin derivative can be used to control the distance of the law of a random variable to normality. This is the so-called Stein-Malliavin calculus.
We need a norm on second-order Malliavin derivative, and we set for all $f:\R\times \R \to \R$,
\begin{equation}
\3 f \3_\beta := \begin{cases}
 \sup \Big\{ \Big|\iint_{\R \times \R} \zeta(x)\zeta(y) f(x,y)dxdy\Big|\,:\,   \|\zeta\|_{L^{\frac 2\beta,\infty}(\R)}=1\Big\} ,&\text{if $ \beta < 1$};\\
\sup  \Big\{ \Big|\iint_{\R \times \R} w_c(x) \zeta(x) w_c(y)\zeta(y) f(x,y)dxdy\Big|\,:\,  \|\zeta\|_{L^{2,\infty}(\R)}=1  \Big\} ,&\text{if $\beta = 1$};\\
\sup  \Big\{ \Big|\iint_{\R \times \R}    \zeta(x)  \zeta(y) f(x,y)dxdy\Big|\,:\,  \|\zeta\|_{L^{2,\infty}(\R)}=1  \Big\} ,&\text{if $\beta > 1$},
\end{cases}
\end{equation}
where $w_c:x \mapsto \log(2+|x|)^\frac12$, and where we have used the short-hand notation
$$
\|\zeta\|_{L^{p,\infty}(\R)}:=\Big(\int_\R \big(\sup_{B(x)} |\zeta|\big)^p dx\Big)^\frac1p.
$$
We then have the following second-order Poincar\'e-Malliavin inequality
\begin{equation}\label{e.SndPI}
\dTV{ \frac{X-\expec{X}}{\sigma}}{\Nc}
\,\lesssim\,\sigma^{-2}\,\expec{\3D^2X\3_{\beta}^4}^\frac14\expec{\|DX\|_\beta^4}^\frac14,
\end{equation}
where $\sigma^2:=\var{X}$ is assumed to be non-zero,
and $\dTV{}{}$ denotes the total variation distance between random variables, which is defined 
for any random variables $\mathcal M_1$ and $\mathcal M_2$ via
$$
\dTV{\mathcal M_1}{\mathcal M_2}=\sup_{A \subset \R} |\mu_1(A)-\mu_2(A)|,
$$
where for $i=1,2$, $\mu_i$  is the distribution of $\mathcal M_i$  
(that is, for any Borel set $A \subset \R$, $\mu_i(A) := \expec{\mathds 1_{\mathcal M_i \in A}}$).

\section{Qualitative homogenization via explicit formulas}\label{sec:qual}

\subsection{Random coefficients}

We consider the popular log-normal coefficient fields, which we presently introduce.
The log-normal coefficient field $a:\R \to (0,+\infty)$ is given by 
$x \mapsto a(x)=\exp(G(x))$, where $G$ is a Gaussian field as the one considered in Section~\ref{sec:Malliavin}.
Since $G$ takes values in $\R$, $a$ is neither bounded nor bounded away from zero.
However, by the  Kolmogorov-Chentsov theorem, since $\mathcal C$ is assumed to be H\"older-continuity at zero, $a$ is (H\"older-)continuous for almost every realization of $G$, and therefore positive and finite on $(0,1)$ (although $\inf_{[0,1]}a>0$ and $\sup_{[0,1]} a<\infty$ depend on the realization). Hence \eqref{0:1D-1} is well-posed almost surely by the Lax-Milgram theorem.

\medskip

We now estimate moments of $a$ and of $a^{-1}$, which will be useful in the sequel.
\begin{lem}\label{eq:mom-a}
For all $p\in \N$ we have
$\expec{a^p}^\frac1p\,=\,\expec{a^{-p}}^\frac1p \,=\, \exp(\frac{\calC(0)}{2}p)$.
\end{lem}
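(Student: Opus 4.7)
The plan is straightforward: this is a direct computation via the moment generating function of a centered Gaussian, exploiting the fact that $a(x)=\exp(G(x))$ and that $G(x)$ is a centered Gaussian with variance $\calC(0)$.

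First, I would observe that by stationarity of $G$ (and hence of $a$), the quantities $\expec{a^p}$ and $\expec{a^{-p}}$ do not depend on the evaluation point, so it suffices to consider a single point $x$. At that point, $a(x)^{\pm p}=\exp(\pm p\, G(x))$, and $G(x)\sim \Nc(0,\calC(0))$ by the assumption stated in Section~\ref{sec:Malliavin}.

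Next, I would invoke the standard formula for the moment generating function of a centered Gaussian $Z\sim \Nc(0,\sigma^2)$, namely $\expec{e^{tZ}}=\exp(\tfrac{t^2\sigma^2}{2})$, which is proved by completing the square in the Gaussian density. Applying this with $\sigma^2=\calC(0)$ and $t=\pm p$ yields
\[
\expec{a^{\pm p}} \,=\, \expec{\exp(\pm p\,G(x))} \,=\, \exp\!\Big(\tfrac{p^2\calC(0)}{2}\Big),
\]
and taking the $p$-th root gives the announced identity. The equality of $\expec{a^p}$ and $\expec{a^{-p}}$ reflects the symmetry $G \stackrel{d}{=} -G$ of the centered Gaussian law.

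There is no genuine obstacle here: the argument is a one-line application of the Gaussian MGF, and the pointwise moments of $a$ and $a^{-1}$ are finite precisely because $\calC(0)<\infty$. The only mild care needed is to note that although $a$ itself is neither bounded nor bounded away from zero (as emphasized just before the lemma), its pointwise exponential moments of every order are finite, which is exactly the content of this lemma and will be invoked repeatedly in the sequel.
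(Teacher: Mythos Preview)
Your proposal is correct and follows essentially the same route as the paper: the paper also uses the symmetry $G\stackrel{d}{=}-G$ to identify $\expec{a^{-p}}$ with $\expec{\exp(pG)}$, and then computes the latter by completing the square in the Gaussian density (equation~\eqref{e.calc-exp}). There is no meaningful difference between the two arguments.
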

Indeed, since $G$ and $-G$ have the same law, $\expec{a^{-p}}=\expec{\exp(-pG)}=\expec{\exp(pG)}$, and a direct calculation yields by reconstructing a square in the exponential
\begin{equation}\label{e.calc-exp}
\expec{\exp(pG)} = \int_{-\infty}^\infty \exp(pt) \frac{1}{\sqrt{2\pi\calC(0)}}\exp(-\frac{t^2}{2\calC(0)})dt  =  \exp(\tfrac{\calC(0)}{2}p^2).
\end{equation}

\subsection{Qualitative homogenization}

Fix a realization of $G$.
By direct integration the unique solution of \eqref{0:1D-1} is explicitly given by
\begin{equation*}  
x\mapsto \int_0^x \frac1{a(\frac y\e)}f(y)dy+\int_0^x \frac{C_1^\e}{a(\frac y\e)}dy+C_2^\e,
\end{equation*}
where $C_1^\e$ and $C_2^\e$ are integration constants.
The condition $u_\e(0)=0$ yields $C_2^\e=0$ whereas the condition $u_\e(1)=0$ yields 
\begin{equation}\label{0:1defC1}
C_1^\e=-\bigg(\int_0^1 \frac1{a(\frac y\e)}dy\bigg)^{-1} \int_0^1 \frac1{a(\frac y\e)}f(y)dy,
\end{equation}
to the effect that
\begin{equation}\label{0:1D-2} 
u_\e:x\mapsto \int_0^x \frac1{a(\frac y\e)}f(y)dy-\int_0^x \frac{1}{a(\frac y\e)}dy\bigg(\int_0^1 \frac1{a(\frac y\e)}dy\bigg)^{-1} \int_0^1 \frac1{a(\frac y\e)}f(y)dy.
\end{equation}
With this explicit formula at hands we may investigate the asymptotic behavior of $u_\e$.
We start with a qualitative convergence result based on the ergodic theorem of Subsection~\ref{sec:erg-th}.
\begin{theor}\label{0:1D-thm-qual}
The solution $u_\e$ of \eqref{0:1D-1} given by \eqref{0:1D-2}
converges almost surely pointwise to the solution 
\begin{equation}\label{0:1D-3} 
\bar u:x\mapsto \int_0^x \frac1{\bar a}f(y)dy-x \int_0^1  \frac1{\bar a}f(y)dy
\end{equation}
of the effective equation 
\begin{equation}\label{0:1D-4}
(\bar a \bar u'(x))'=f'(x), \quad \bar u(0)=0, \quad  \bar u(1)=0,
\end{equation}
where the effective coefficient is given by
\begin{equation}\label{0:1D-5}
\bar a = \exp(-\tfrac{\calC(0)}{2})>0.
\end{equation}
\end{theor}
This result shows in particular that the family of solutions \eqref{0:1D-2} of  \eqref{0:1D-1} converges (pointwise in space, almost surely in probability) 
to the solution of a similar equation \eqref{0:1D-4}, this time with constant coefficient $\bar a$ (which only depends on the law of $a$ via \eqref{0:1D-5} --- this 
explicit form is however specific to dimension $d=1$ \emph{and} to log-normal coefficients),
and with the same forcing term $f$ and initial conditions.
\begin{proof}[Proof of Theorem~\ref{0:1D-thm-qual}]
Applying the ergodic theorem to the random field $\frac1 a$ with the test functions $f$ and $1$ yields for all $x\in \R^d$  almost surely
$$
\lim_{\e \downarrow 0} \int_0^x \frac1{a(\frac y\e)} f(y)dy =\expec{\frac1a} \int_0^x  f(y)dy, \quad \lim_{\e \downarrow 0} \int_0^x \frac1{a(\frac y\e)} dy =\expec{\frac1a} x,
$$
so that $u_\e(x)\to \bar u(x)$ almost surely (strictly speaking, one should first consider a countable set of points $x$, say $\mathbb Q\cap [0,1]$, for which the convergence
holds almost surely since a countable intersection of events of full probability remains of full probability, and conclude by uniform continuity of $u_\e$ and $\bar u$).
We conclude by noticing that $\bar u$ is indeed the unique solution of~\eqref{0:1D-4}
with $\bar a :=\expec{\frac1a}^{-1}$, and \eqref{0:1D-5} follows by Lemma~\ref{eq:mom-a} for $p=-1$.
\end{proof}
The map $\bar u$ obviously does not depend on $\e$, whereas the map $u_\e$  
displays oscillations at scale $\e$ (via the rescaling $a(\frac \cdot \e)$).
Hence, although $u_\e$ and $\bar u$ are close pointwise, their gradients cannot be close
pointwise, as confirmed by the following identities:
\begin{eqnarray}
u_\e'(x)&=&\frac{1}{a(\frac x\e)}f(x)-\frac{1}{a(\frac x\e)}\bigg(\int_0^1 \frac1{a(\frac y\e)}dy\bigg)^{-1} \int_0^1 \frac1{a(\frac y\e)}f(y)dy, 
\label{0:1D-16}
\\
\bar u'(x)&=&\expec{\frac 1a} f(x)-\expec{\frac 1a}\int_0^1 f(y)dy.\nonumber
\end{eqnarray}
In view of these formulas, on may ``reconstruct'' $u_\e'$ using $\bar u'$ and quantities that only depend on $a$ (and in particular, not
on $f$ nor on boundary conditions) up to an error which vanishes in the limit $\e \downarrow 0$ almost surely.
The following bears the name of a corrector result: One corrects  $\bar u'$ to get a pointwise accurate approximation
of $u_\e'$ (by reconstructing a posteriori the oscillations)\footnote{In this interpretation, it is more instructive to write
the LHS of \eqref{0:1D-6} as $u_\e'(x)-\bar u'(x)-\bar u'(x)\bar a (\frac1{a(\frac x\e)}-\frac1{\bar a})$, where $\bar a (\frac1{a(\frac x\e)}-\frac1{\bar a})$ is the
classical corrector gradient --- see Section~\ref{0.quant-revisited}.}.
\begin{theor}\label{0:1D-thm-qual2}
The solutions $u_\e$ of \eqref{0:1D-1} and $\bar u$ of \eqref{0:1D-4} satisfy
almost surely for all $x\in [0,1]$
\begin{eqnarray}\label{0:1D-6}
\lim_{\e \downarrow 0} u_\e'(x)-\bar u'(x) \frac{\bar a}{a(\frac x\e)}&=&0,
\end{eqnarray}
where $\bar a$ is given by \eqref{0:1D-5}.
\end{theor}
\begin{proof}[Proof of Theorem~\ref{0:1D-thm-qual2}]
A direct calculation yields
\begin{equation}\label{0:1D-13}
u_\e'(x)-\bar u'(x)\frac{\bar a}{a(\frac x\e)}
%\\
\,=\,\frac1{a(\frac x\e)} \bigg(\int_0^1f(y)dy- \Big(\int_0^1 \frac{1}{a(\frac y\e)}dy\Big)^{-1}
\int_0^1 \frac1{a(\frac y\e)}f(y)dy \Bigg),
\end{equation}
so that the claim follows again from the ergodic theorem applied to the random field $\frac 1a$ and the test functions $1$ and $f$.
\end{proof}
Theorems~\ref{0:1D-thm-qual} and~\ref{0:1D-thm-qual2} are both qualitative in the sense that they do not come with error estimates.
In view of the proofs, in order to make these results quantitative, one needs to quantify the convergence in the ergodic theorem, and therefore appeal (in view of the Gaussianity of $G$) to Malliavin calculus.

\section{Quantification of oscillations}\label{sec:2s}

The following result quantifies (optimally) Theorems~\ref{0:1D-thm-qual} and~\ref{0:1D-thm-qual2}.
\begin{theor}\label{0:1D-thm-quant1}
Let the covariance $\calC$ of $G$ satisfy \eqref{e.decay-Co} for some $\beta>0$.
Then the solutions $u_\e$ of \eqref{0:1D-1} and $\bar u$ of \eqref{0:1D-4} satisfy
 for all $x\in [0,1]$
\begin{equation}
\expec{|u_\e(x)-\bar u(x)|^2}^\frac12 + \expec{\Big|u_\e'(x)-\bar u'(x)\frac{\bar a}{a(\frac x\e)}\Big|^2}^\frac12   \,
\lesssim\, \ \|f\|_{L^2(0,1)}  \pi_\beta(\e),
\label{0:1D-7}
\end{equation}
where 
\begin{equation}\label{e.pibeta}
 \pi_\beta(\e):=
 \begin{cases}
\e^{\frac{\beta}{2}},&\text{if $0<\beta<1$},\\
\sqrt \e |\log(\e)|^\frac12,&\text{if $\beta=1$},\\
\sqrt \e ,&\text{if $\beta>1$}.
\end{cases}
\end{equation}
\end{theor}
Theorem~\ref{0:1D-thm-quant1} gives a very accurate description of the oscillations of $u_\e$ and of $u'_\e$.
Within our assumptions, estimates~\eqref{0:1D-7} are optimal in terms of scaling, that is, in terms of powers of $\e$.
The norm in probability (that is, the second moment) can however be largely improved.
The proof below actually shows that these estimates hold 
in the form
\[
\expec{\frac1C \log^2\Big(1+\frac{|u_\e(x)-\bar u(x)|+|u_\e'(x)-\bar u'(x)\frac{\bar a}{a(\frac x\e)}|}{\|f\|_{L^2(0,1)}  \pi_\beta(\e)}\Big)} \le 2
\] 
for some $0<C<\infty$ independent of $\e$,
see~\eqref{0:1D-14}. In more analytical terms, the random variable $\frac{|u_\e(x)-\bar u(x)|+|u_\e'(x)-\bar u'(x)\frac{\bar a}{a(\frac x\e)}|}{\|f\|_{L^2(0,1)}  \pi_\beta(\e)}$ is not bounded but it belongs (uniformly) to some Orlicz space in probability (that contains all $L^p$ spaces).

\medskip

For future reference, we single out an elementary result we shall use in a simpler version in the proof  of Theorem~\ref{0:1D-thm-quant1}.
\begin{lem}\label{lem:Minko}
For all $\alpha_1,\alpha_2,\gamma \in \R$, $p\ge q\ge 1$, deterministic $f \in L^q(\R)$, and $t\in \R$, we have
\begin{multline}\label{e.Minko}
\expec{\Big(\int_0^t a(z)^{\alpha_1q} \big(\fint_0^t \tfrac1a\big)^{\alpha_2 q} |f(z)|^q w_c(z)^{\gamma q}dz\Big)^\frac pq}^\frac1p
\\
\, \le \exp(\tfrac12 (|\alpha_1|+|\alpha_2|)^2\calC(0)p) \|fw_c^\gamma\|_{L^q(0,t)}.
\end{multline}
\end{lem}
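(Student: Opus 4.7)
The plan is to decouple the space and probability integrals via Minkowski's inequality, and then reduce the resulting pointwise-in-$z$ moment to an explicit Gaussian moment-generating function computation.

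Since $p\ge q\ge 1$, Minkowski's integral inequality applied to the nonnegative integrand $F(z):=a(z)^{\alpha_1}\big(\fint_0^t a^{-1}\big)^{\alpha_2}|f(z)|w_c(z)^\gamma$ yields
\[
\expec{\Big(\int_0^t F(z)^q\, dz\Big)^{p/q}}^{1/p}
\,\le\,\Big(\int_0^t \expec{F(z)^p}^{q/p}\, dz\Big)^{1/q},
\]
which reduces the claim to the uniform-in-$z$ estimate $\expec{a(z)^{\alpha_1 p}\big(\fint_0^t a^{-1}\big)^{\alpha_2 p}}^{1/p}\le\exp(\tfrac{p}{2}(|\alpha_1|+|\alpha_2|)^2\calC(0))$. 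To establish the latter, I would apply Jensen's inequality to the convex function $x\mapsto x^{\alpha_2 p}$ in the spatial variable to obtain $\big(\fint_0^t a^{-1}\big)^{\alpha_2 p}\le\fint_0^t a^{-\alpha_2 p}$, then use Fubini, arriving at
\[
\expec{a(z)^{\alpha_1 p}\bigl(\fint_0^t a^{-1}\bigr)^{\alpha_2 p}}
\,\le\,\fint_0^t \expec{\exp\bigl(p\alpha_1 G(z)-p\alpha_2 G(y)\bigr)}\, dy.
\]
The integrand equals the exponential moment of a centered Gaussian variable of variance $p^2(\alpha_1^2+\alpha_2^2)\calC(0)-2p^2\alpha_1\alpha_2\calC(z-y)$, which by \eqref{e.calc-exp} combined with the positive-definiteness bound $|\calC(z-y)|\le\calC(0)$ is at most $\exp(\tfrac{p^2}{2}(|\alpha_1|+|\alpha_2|)^2\calC(0))$; taking the $p$-th root then gives the pointwise estimate and, plugging it back into the Minkowski bound, the announced inequality.

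The main obstacle is the convexity hypothesis for the spatial Jensen step, which requires $\alpha_2 p\le 0$ or $\alpha_2 p\ge 1$. In the intermediate range $0<\alpha_2 p<1$, I would instead split the expectation via Hölder's inequality in probability with conjugate exponents $(r,r')$ chosen large enough that $\alpha_2 p r'\ge 1$ restores convexity, then bound each factor using Lemma~\ref{eq:mom-a} applied to $a(z)^{\alpha_1 p r}$ together with a Fubini--Jensen argument as above for $\big(\fint a^{-1}\big)^{\alpha_2 p r'}$, and finally optimize over $r$. The elementary identity $\min_{1/r+1/r'=1}(\alpha_1^2 r+\alpha_2^2 r')=(|\alpha_1|+|\alpha_2|)^2$, attained at $r=(|\alpha_1|+|\alpha_2|)/|\alpha_1|$, recovers the advertised constant in this regime as well.
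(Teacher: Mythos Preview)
Your argument is correct and follows the same two-step skeleton as the paper: Minkowski's inequality to swap the $L^q(dz)$ and $L^p(\Omega)$ norms, followed by a uniform-in-$z$ bound on $\expec{a(z)^{\alpha_1 p}(\fint_0^t a^{-1})^{\alpha_2 p}}$. The difference is in how this moment is handled. The paper immediately splits the two factors by H\"older with exponents $\big(\tfrac{|\alpha_1|+|\alpha_2|}{|\alpha_1|},\tfrac{|\alpha_1|+|\alpha_2|}{|\alpha_2|}\big)$ --- precisely the optimizer your final paragraph identifies --- and only then applies spatial Jensen to $(\fint a^{-1})^{p(|\alpha_1|+|\alpha_2|)}$. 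Your primary route instead applies spatial Jensen first and evaluates the resulting joint Gaussian moment $\expec{\exp(p\alpha_1 G(z)-p\alpha_2 G(y))}$ directly via the covariance bound $|\calC|\le\calC(0)$; this is a clean shortcut in the convex regime $\alpha_2 p\notin(0,1)$ that avoids the H\"older split, while your fallback is exactly the paper's argument. Note that both proofs tacitly need $p(|\alpha_1|+|\alpha_2|)\ge 1$ for the spatial Jensen step (yours at the optimal $r'=(|\alpha_1|+|\alpha_2|)/|\alpha_2|$, the paper's at exponent $p(|\alpha_1|+|\alpha_2|)$); indeed the stated inequality can fail otherwise (take $\alpha_1=0$, $\alpha_2=\tfrac12$, $p=q=1$, $t\to\infty$), but every use of the lemma in the paper has $|\alpha_1|+|\alpha_2|\ge 1$, so this is harmless.
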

\begin{proof}[Proof of Lemma~\ref{lem:Minko}]
By the Minkowski inequality, since $p\ge q$ we have
\begin{multline}\label{e.Minko+}
\expec{\Big(\int_0^t a(z)^{\alpha_1q} \big(\fint_0^t \tfrac1a\big)^{\alpha_2 q} |f(z)|^q \omega_c(z)^{\gamma q}dz\Big)^\frac pq}^\frac1p
\\
\le \, \Big(\int_0^t \expec{a(z)^{\alpha_1p} \big(\fint_0^t \tfrac1a\big)^{\alpha_2 p}}^\frac qp |f(z)|^q w_c^{\gamma q}(z)dz\Big)^\frac1q.
\end{multline}
It remains to treat the expectation, and we assume without loss of generality that $\alpha_1,\alpha_2\ge 0$ (otherwise we replace $a$ by $\tfrac1a$ or $\fint_0^t \frac1a$ by $\big(\fint_0^t \tfrac1a)^{-1}$). By H\"older's inequality with exponents $(\frac{\alpha_1+\alpha_2}{\alpha_1},\frac {\alpha_1+\alpha_2}{\alpha_2})$ we have
\begin{equation*}
 \expec{a(z)^{\alpha_1p} \big(\fint_0^t \tfrac1a\big)^{\alpha_2 p}}
 \\\le\, \expec{a^{p(\alpha_1+\alpha_2)}}^\frac{\alpha_1}{\alpha_1+\alpha_2}
  \expec{\big(\fint_0^t\tfrac1a\big)^{p(\alpha_1+\alpha_2)}}^\frac{\alpha_2}{\alpha_1+\alpha_2},
\end{equation*}
so that by Jensen's inequality, stationarity, and using that $a(x)=\exp(G(x))$, we obtain 
using Lemma~\ref{eq:mom-a}
$$
 \expec{a(z)^{\alpha_1p} \big(\fint_0^t \tfrac1a\big)^{\alpha_2 p}}\,\le \, \expec{\exp(p(\alpha_1+\alpha_2)G)} \le  \exp(\frac{1}2 \calC(0)p^2(\alpha_1+\alpha_2)^2). 
 $$
This yields~\eqref{e.Minko} in combination with~\eqref{e.Minko+}. 
\end{proof}
We conclude with the proof of Theorem~\ref{0:1D-thm-quant1}.
\begin{proof}[Proof of Theorem~\ref{0:1D-thm-quant1}]
The proof is based on the moment bounds \eqref{e.Mal:moment}
 for the Malliavin derivative.
Both LHS terms of  \eqref{0:1D-7} have the same scaling and can be treated similarly.
We only display the argument for the first term and set  $w_\e=u_\e-\bar u$.
We then consider the splitting $w_\e=w_{\e,1}-w_{\e,2}-w_{\e,3}$,
where
\begin{eqnarray*}
w_{\e,1}(x)&:=&\int_0^x (\frac1{a(\frac y\e)}-\frac1{\bar a})f(y)dy-x \int_0^1  (\frac1{a(\frac y\e)}-\frac1{\bar a})f(y)dy,\\
w_{\e,2}(x)&:=&\int_0^x (\frac{1}{a(\frac y\e)}-\frac1{\bar a})dy\bigg(\int_0^1 \frac1{a(\frac y\e)}dy\bigg)^{-1} \int_0^1 \frac1{a(\frac y\e)}f(y)dy,\\
w_{\e,3}(x)&:=&x \frac1{\bar a}\bigg( \Big(\int_0^1 \frac1{a(\frac y\e)}dy\Big)^{-1}-\bar a\bigg) \int_0^1 \frac1{a(\frac y\e)}f(y)dy.
\end{eqnarray*}
For all $x\in [0,1]$ and $p\ge 1$, by the triangle inequality in $L^p(\Omega)$
$$
\expec{|w_\e(x)|^p}^\frac1p \,\le \, \expec{|w_{\e,1}(x)|^p}^\frac1p +\expec{|w_{\e,2}(x)|^p}^\frac1p +\expec{|w_{\e,2}(x)|^p}^\frac1p.
$$
We then  change variables to make heterogeneities of order $1$, that is $y \mapsto y/\e$.
For the first RHS term we directly have
$$
\expec{|w_{\e,1}(x)|^p}^\frac1p \,\le \, x\expec{\Big|\fint_0^{\frac x\e} (\frac1{a(y)}-\frac1{\bar a})f(\e y)dy\Big|^p}^\frac1p+x\expec{\Big|\fint_0^{\frac 1\e} (\frac1{a( y)}-\frac1{\bar a})f(\e y)dy\Big|^p}^\frac1p.
$$
For the second RHS term we separate the first factor in $w_{\e,2}$ (which will give the smallness) from the other two factors, and use that $a(x)=\exp(G(x))$ so that by H\"older's inequality with exponents $(2p,4p,4p)$ and a multiple use of Jensen's inequality we obtain
$$
\expec{|w_{\e,2}(x)|^p}^\frac1p \,\le \,x \expec{\Big|\fint_0^{\frac x\e} (\frac1{a( y)}-\frac1{\bar a})dy\Big|^{2p}}^\frac1{2p} \expec{\exp(4pG)}^\frac1{2p}\|f\|_{L^2(0,1)}.$$
For the last RHS term we proceed similarly, but first use the reformulation 
\begin{equation}\label{0:1D-transfo-inverse}
\Big(\int_0^1 \frac1{a(\frac y\e)}dy\Big)^{-1}-\bar a = \Big(\frac1{\bar a}-\fint_0^1 \frac1{a(\frac y \e)}dy\Big) \Big( \frac1{\bar a} \fint_0^1 \frac1{a(\frac y\e)}dy\Big)^{-1},
\end{equation}
so that as for the second term we obtain by H\"older's and Jensen's inequalities
$$
\expec{|w_{\e,3}(x)|^p}^\frac1p \,\le \,x \expec{\Big|\fint_0^{1} (\frac1{a( y)}-\frac1{\bar a})dy\Big|^{2p}}^\frac1{2p} \expec{\exp(4pG)}^\frac1{2p}\|f\|_{L^2(0,1)}.
$$
Hence,
\begin{eqnarray}
\expec{|w_\e(x)|^p}^\frac1p 
&\le& \e \expec{\Big|\int_0^{\frac x\e} (\frac1{a(y)}-\frac1{\bar a})f(\e y)dy\Big|^p}^\frac1p+\e x \expec{\Big|\int_0^{\frac 1\e} (\frac1{a( y)}-\frac1{\bar a})f(\e y)dy\Big|^p}^\frac1p
\nonumber\\
&&+\e \expec{\exp(4pG)}^\frac1{2p}\|f\|_{L^2(0,1)}\expec{\Big|\int_0^{\frac x\e} (\frac1{a( y)}-\frac1{\bar a})dy\Big|^{2p}}^\frac1{2p}
\nonumber\\
&&+\e x\expec{\exp(4pG)}^\frac1{2p}\|f\|_{L^2(0,1)}\expec{\Big| \int_0^{\frac1\e} (\frac1{a(y)}-\frac1{\bar a})dy\Big|^{2p} }^\frac1{2p}.\label{0:1D-9}
\end{eqnarray}
In order to appeal to the moment bound \eqref{e.Mal:moment}, it remains
to  
calculate the Malliavin derivative \eqref{e.Mallder} of random variables of the form $Y=\int_0^t (\frac1{a(y)}-\frac1{\bar a})g(y)dy=\int_0^t (\exp(-G(y))-\frac1{\bar a})g(y)dy$ for fixed test function $g$
and real number $t$, which takes the form
\begin{equation}\label{0:1D-17}
D_z Y=-\exp(-G(z))g(z) \mathds{1}_{z\in (0,t)}.
\end{equation}
By the definition \eqref{e.Mallnorm} of the norm $\|\cdot\|_\beta$ and Lemma~\ref{lem:Minko}, 
$$
\expec{\|DY\|_\beta^{2p}}^\frac1{2p} \,\le \, \exp( \calC(0)p) \|g \mathds 1_{(0,t)}\|_\beta,
$$
so that by  \eqref{e.Mal:moment}
\begin{equation}\label{e.2scquant-1}
\expec{|Y|^{2p}}^\frac1{2p} \,\lesssim\, \sqrt p \exp(\calC(0) p) \|g\mathds 1_{(0,t)}\|_{\beta}.
\end{equation}
We apply~\eqref{e.2scquant-1} to $g \equiv 1$ and $g\equiv f(\e \cdot)$ with $t=\frac x\e$ and $t=\frac1\e$, in which case we have (since $0\le x\le 1$ and  $\omega_c(\frac y \e) \le \omega_c(\e^{-1}) \omega_c(y)$)
$$
 \|g\omega_c ^{ \gamma}\mathds 1_{(0,t)}\|_{L^q(\R)} \,\le \, \|g\|_{L^2(0,1)}\omega_c^\gamma(\tfrac1\e) \e^{-\frac1q}.
$$
Hence \eqref{0:1D-9} turns into the improved form of \eqref{0:1D-7}
\begin{equation} \label{0:1D-14}
\expec{|w_\e(x)|^p}^\frac1p \, \lesssim \, \sqrt p \exp(5\calC(0)p) \|f\|_{L^2(0,1)}  
\pi_\beta(\e).
\end{equation}
\end{proof}

\section{Quantification of fluctuations: Scaling law of observables}\label{sec:law}

\subsection{Scaling of fluctuations}

As emphasized in the introduction, $u_\e$ displays both spatial oscillations (as characterized by Theorem~\ref{0:1D-thm-quant1}) and random fluctuations ($u_\e$ is a random field). 
We now turn to the latter, and focus on observables of 
the solution $u_\e$.
An observable is typically a local average of $u_\e$ or of its gradient -- it is another name for a measurement in a physical experiment.
In the first part of this article, we have shown how to replace $u_\e$ by some deterministic function $\bar u$ up to some precision. Here we aim at giving an asymptotic description of the law of the fluctuations of observables of $u_\e$.
This can be seen as an uncertainty quantification: we characterize the law of the observables given uncertain coefficients.
More precisely, given $g \in  C^1([0,1])$, we consider the obserbable 
$$
I_\e(f,g) \,=\, -\int_0^1 u_\e(x) g'(x)dx=\int_0^1 u_\e'(x)g(x)dx
$$
(recall that $f$ is in the RHS of \eqref{0:1D-1}).
A similar argument as in the proof of Theorem~\ref{0:1D-thm-quant1} allows to 
characterize the order of magnitude of $\var{I_\e(f,g)}$ wrt $\e$. 
\begin{theor}\label{0:1D-thm-quant2}
Let the covariance $\calC$ of $G$ satisfy \eqref{e.decay-Co} for some $\beta>0$, $f,g \in C^1([0,1])$, and let $I_\e(f,g)=\int_0^1 u_\e'(x)g(x)dx$, where $u_\e$ is the solution of  \eqref{0:1D-1}.
We have
\begin{eqnarray}\label{0:1D-15}
\var{I_\e(f,g)}&\lesssim &  \|f\|_{L^4(0,1)}^2 \|g\|_{L^4(0,1)}^2\pi_\beta(\e)^2
\\
&=&\|f\|_{L^4(0,1)}^2 \|g\|_{L^4(0,1)}^2
\begin{cases}
\e^{\beta},&\text{if $0<\beta<1$},\\
 \e |\log(\e)| ,&\text{if $\beta=1$},\\
 \e ,&\text{if $\beta>1$}.
\end{cases}
\nonumber
\end{eqnarray}
\end{theor}
Estimate~\eqref{0:1D-15} encodes the natural scaling of fluctuations
associated with a Gaussian field with covariance $\calC$ (in dimension $d\ge 1$, one would have the CLT scaling $\e^d$ for $\beta>d$).
\begin{proof}[Proof of Theorem~\ref{0:1D-thm-quant2}]
By \eqref{0:1D-16},
\begin{equation} \label{0:1D-19}
I_\e(f,g)\,=\,\fint_0^{\frac 1\e} \frac{1}{a(x)}f(\e x)g(\e x)dx-\bigg(\fint_0^{\frac1\e} \frac1{a( y)}dy\bigg)^{-1}  \fint_0^{\frac 1\e}\frac{1}{a(y)}g(\e y)dy\fint_0^{\frac 1\e} \frac1{a(y)}f(\e y)dy.
\end{equation}
We shall appeal again to the variance estimate for the Malliavin derivative $\var{I_\e(f,g)}\,\lesssim\,   \expec{\|DI_\e(f,g)\|_\beta^2}$, and
need to identify the latter.
Using \eqref{0:1D-17}, the Leibniz rule for the Malliavin derivative, 
and the identity $D_z\frac1a(x)=-\exp(-G(z)) \delta(x-z)=-\frac1a(z)\delta(x-z)$, we obtain for all $z\in \R$ 
\begin{eqnarray}  
\lefteqn{D_z I_\e(f,g)=- \e \frac1{a(z)} f(\e z)g(\e z) \mathds{1}_{z\in (0,\frac1\e)} }\nonumber
\\
&&+\e\frac1{a(z)}f(\e z)  \mathds{1}_{z\in (0,\frac1\e)} \bigg(\fint_0^{\frac1\e} \frac1{a( y)}dy\bigg)^{-1}  \fint_0^{\frac 1\e}\frac{1}{a(y)}g(\e y)dy \nonumber
\\
&&+\e\frac1{a(z)}g(\e z)  \mathds{1}_{z\in (0,\frac1\e)} \bigg(\fint_0^{\frac1\e} \frac1{a( y)}dy\bigg)^{-1} \fint_0^{\frac 1\e} \frac1{a(y)}f(\e y)dy\nonumber
\\
&&-\e \frac1{a(z)}\mathds{1}_{z\in (0,\frac1\e)}\bigg(\fint_0^{\frac1\e} \frac1{a( y)}dy\bigg)^{-2}  \fint_0^{\frac 1\e}\frac{1}{a(y)}g(\e y)dy\fint_0^{\frac 1\e} \frac1{a(y)}f(\e y)dy. \label{0:1D-18}
\end{eqnarray}
We then conclude by a suitable use of Lemma~\ref{lem:Minko}.
\end{proof}

\medskip

When we go beyond scalings and identify limiting laws, results depend on whether the covariance function $\calC$ is integrable or not, that is, whether $\beta>1$ or $\beta \le 1$.
To ease the reading, we distinguish these cases both in terms of statements and proofs, and first give the proof for $\beta>1$. The non-integrable case $\beta \le 1$ is presented afterwards and capitalizes on the proof for $\beta>1$.

\medskip

There are two main important differences between $\beta>1$ and $\beta<1$ ($\beta=1$ is in between).
For $\beta>1$, there is a unique limiting law, and it is a white noise (the covariance is trivial), cf.~Theorem~\ref{0:1D-thm-quant3}.
For $\beta<1$, convergence in law holds up to extracting a subsequence (or assuming some additional scaling property as we do here), and the limiting law is a fractional noise (the covariance has a nontrivial kernel), cf.~Theorem~\ref{0:1D-thm-quant3-non-int}.

\subsection{Scaling law for integrable correlations}

In this paragraph we place ourselves in the case of integrable covariance function, that is, for $\beta>1$.
The following result goes further in the analysis and establishes that, when properly rescaled, $I_\e(f,g)$
satisfies a (quantitative) central limit theorem.
We split the statement into two parts: the convergence of the covariance structure and 
the normal approximation.
We start with the covariance structure.
\begin{prop}\label{prop:cov-int}
Let $\beta>1$, and $G \not\equiv 0$ (otherwise $a \equiv 1$ is constant).
Let $f,g \in C^1([0,1])$, let $u_\e$ be the solution of  \eqref{0:1D-1},  $v_\e$
be the solution of  \eqref{0:1D-1} for $f$ replaced by $g$, and set $I_\e(f,g):=\int_0^1 u_\e'(x)g(x)dx=-\int_0^1 v_\e'(x)f(x)dx$ and  $\sigma_\e(f,g):=\sqrt{\e^{-1}\var{I_\e(f,g)}}$.
Then we have 
\begin{equation}\label{e.conv-cov-int}
|\sigma_\e(f,g)^2-\sigma(f,g)^2|\,\lesssim\, \e^{\frac 12 \wedge (\beta-1)},
\end{equation}
where 
\begin{equation}\label{0:1D-lim-var}
\sigma^2(f,g):= \calQ  \int_0^1 \Big(f-\int_0^1f\Big)^2\Big(g-\int_0^1 g\Big)^2 ,
\end{equation}
with $\calQ=\int_\R \cov{\frac1a(x)}{\frac1a(0)}dx>0$.
\end{prop}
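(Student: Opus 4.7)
The strategy is to linearize $I_\e(f,g)$ with respect to fluctuations of $\tfrac{1}{a(\cdot/\e)}$ around its mean $\bar b := \expec{1/a} = 1/\bar a$, extract a leading Gaussian-like contribution, and compute its variance in the asymptotic $\e \downarrow 0$. Starting from~\eqref{0:1D-19} and writing $\tfrac{1}{a(y/\e)} = \bar b + \tilde b_\e(y)$ with $\tilde b_\e(y) := \tfrac{1}{a(y/\e)} - \bar b$, I would Taylor-expand the factor $\bigl(\int_0^1 \tfrac{1}{a(y/\e)}\,dy\bigr)^{-1}$ around $\bar b^{-1}$ using~\eqref{0:1D-transfo-inverse} and regroup terms. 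Setting $\tilde f := f - \int_0^1 f$ and $\tilde g := g - \int_0^1 g$, the linear part of the expansion is
$$
J_\e \,:=\, \int_0^1 \tilde b_\e(y)\,\tilde f(y)\,\tilde g(y)\,dy,
$$
and one obtains a decomposition $I_\e(f,g) - \expec{I_\e(f,g)} = J_\e + R_\e$, where $R_\e$ is a finite sum of products of at least two spatial averages of $\tilde b_\e$.

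Next I would control the remainder as in the proof of Theorem~\ref{0:1D-thm-quant1}: the Malliavin moment bound~\eqref{e.Mal:moment} together with Lemma~\ref{lem:Minko} and the identity $\|\cdot\|_\beta = \|\cdot\|_{L^2(\R)}$ (valid since $\beta>1$) give $\expec{\bigl|\int_0^1 \tilde b_\e h\bigr|^{2p}}^{1/(2p)} \lesssim \sqrt{\e}$ for bounded deterministic $h$ and any $p\geq 1$. Hence $\var{R_\e} \lesssim \e^2$, and Cauchy--Schwarz yields $|\cov{J_\e}{R_\e}| \lesssim \sqrt{\var{J_\e}\var{R_\e}} \lesssim \e^{3/2}$. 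After division by $\e$, both contributions to $\sigma_\e(f,g)^2$ are $O(\sqrt{\e})$, compatible with~\eqref{e.conv-cov-int}.

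Then I would compute $\var{J_\e}$ directly. A change of variables $y = x - \e z$ gives
$$
\e^{-1}\var{J_\e} \,=\, \int_0^1 \int_{(x-1)/\e}^{x/\e} K(z)\,\tilde f(x)\tilde g(x)\,\tilde f(x-\e z)\tilde g(x-\e z)\,dz\,dx,
$$
where
$$K(z) := \cov{\tfrac{1}{a(z)}}{\tfrac{1}{a(0)}} = \bar b^2\bigl(e^{\calC(z)}-1\bigr)$$
satisfies $|K(z)| \lesssim (1+|z|)^{-\beta}$ by Lipschitz-continuity of $e^{\cdot}$ on a bounded set and the decay of $\calC$. I would compare this to its formal limit $\bigl(\int_\R K\bigr)\int_0^1 \tilde f^2\tilde g^2 = \calQ \int_0^1 \tilde f^2 \tilde g^2 = \sigma^2(f,g)$, by separating the \emph{tail error} (contribution of $|z| > \min(x,1-x)/\e$, bounded via the decay of $K$) from the \emph{regularity error} $\tilde f\tilde g(x-\e z) - \tilde f\tilde g(x) = O(\e|z|)$ (bounded via the Lipschitz regularity of $f,g$). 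Both errors are $O(\e^{(\beta-1)\wedge 1})$ (with a logarithmic correction at $\beta = 2$, harmlessly absorbed). Combined with the previous step this gives~\eqref{e.conv-cov-int} at the rate $\e^{1/2 \wedge (\beta-1)}$. Finally, positivity of $\calQ = \bar b^2 \int_\R (e^{\calC(z)}-1)\,dz$ follows from the convexity bound $e^t \geq 1 + t$ and from $\int_\R \calC = \bigl(\int_\R \calC^\circ\bigr)^2 \geq 0$, which yield $\calQ \geq \bar b^2 \int_\R \calC \geq 0$ with strict inequality as soon as $\calC \not\equiv 0$, i.e.\ as soon as $G \not\equiv 0$.

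The main obstacle is Step~3: the test function $\tilde f \tilde g$ does not vanish at the endpoints $x \in \{0,1\}$, so the tail of $K$ is not integrated against a decaying weight, and one must carefully balance the regularity error (favoured by small $|z|$) against the tail error (favoured by large $|z|$) to extract the quantitative rate. The resulting $\e^{(\beta-1)\wedge 1}$ dominates the $\sqrt\e$ produced by the quadratic remainder exactly when $\beta < 3/2$, and conversely; the announced rate $\e^{1/2 \wedge (\beta-1)}$ is then the worst of the two.
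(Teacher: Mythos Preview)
Your proposal is correct and follows essentially the same strategy as the paper: linearize $I_\e(f,g)$ around $\bar b=\expec{1/a}$, control the quadratic remainder by the Malliavin moment bounds (yielding the $\sqrt\e$ contribution), compute the variance of the linear part via the covariance kernel $K(z)=\cov{\tfrac1a(z)}{\tfrac1a(0)}$ (yielding the $\e^{(\beta-1)\wedge1}$ contribution), and finally prove $\calQ>0$.

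Two minor differences are worth noting. First, you identify the linear part directly as $J_\e=\int_0^1\tilde b_\e\,\tilde f\,\tilde g$, exploiting the algebraic identity $fg+\bar f\bar g-f\bar g-g\bar f=(f-\bar f)(g-\bar g)$ upfront; the paper instead writes the linear part as a sum $X_{1,\e}+X_{2,\e}-X_{3,\e}-X_{4,\e}$ of four averages and then expands the variance into sixteen covariances via polarization of~\eqref{0:1D-thm5-1.3+}, only recombining them into $\sigma^2(f,g)$ at the end. Your route is shorter. Second, for the positivity of $\calQ$ the paper invokes \emph{strong} convexity of $t\mapsto e^t-1$ on a bounded interval to extract a quantitative lower bound $\calQ\ge\exp(\calC(0))\bigl(\int_\R\calC+\tfrac{\gamma}{2}\int_\R\calC^2\bigr)$, whereas you only need the \emph{strict} inequality $e^t>1+t$ for $t\ne0$ together with $\int_\R\calC=(\int_\R\calC^\circ)^2\ge0$; just make explicit that it is the first of your two inequalities $\calQ\ge\bar b^2\int_\R\calC\ge0$ that is strict (the second need not be, since $\int_\R\calC$ may vanish even when $\calC\not\equiv0$).
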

\begin{rem}
The property $\calQ>0$ is a consequence of our choice $a(x)=\exp(G(x))$.
We refer the reader to \cite{Taqqu,BGMP-08,LNZH-17} for degenerate cases for which
$\int_\R \cov{\frac1a(x)}{\frac1a(0)}dx=0$ and we have to go further to find the next nonzero term in the chaos expansion (one speaks of Hermite rank).
The property $\calQ>0$ is however generic (in the sense that suitable small perturbations of $\calC$ make the problem non-degenerate).
\end{rem}
We now turn to the normal approximation.
\begin{prop}\label{prop:NA-int}
Let $\beta>1$.
Let $f,g \in C^1([0,1])$, let $u_\e$ be the solution of  \eqref{0:1D-1},  $v_\e$
be the solution of  \eqref{0:1D-1} for $f$ replaced by $g$, and set $I_\e(f,g):=\int_0^1 u_\e'(x)g(x)dx=-\int_0^1 v_\e'(x)f(x)dx$ and $\sigma_\e(f,g):=\sqrt{\e^{-1}\var{I_\e(f,g)}}$.
Then, as long as $\sigma_\e(f,g)\ne 0$, we have
\begin{equation}\label{e:NA-int}
d_{TV}\Big(\frac{I_\e(f,g)-\expec{I_\e(f,g)}}{\sqrt\e \sigma_\e(f,g)},\mathcal N \Big) \,\lesssim_{f,g}\, \frac1{\sigma_\e(f,g)^2}\sqrt \e \exp(C |\log \e|^\frac12),
\end{equation}
where $\mathcal N$ is a centered normal variable of variance unity,
and $d_{TV}$ denotes the total variation distance between probability measures.
\end{prop}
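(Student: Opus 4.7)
The plan is to apply the second-order Stein--Malliavin inequality \eqref{e.SndPI} to $X=I_\e(f,g)$, whose variance equals $\e\,\sigma_\e(f,g)^2$ by definition. The statement will follow once we establish
\begin{equation*}
\expec{\|DI_\e\|_{1+}^4}^{1/4}\lesssim \sqrt{\e},\qquad \expec{\3 D^2 I_\e \3_{1+}^4}^{1/4}\lesssim \e\exp(C|\log\e|^{1/2}),
\end{equation*}
since substituting these into \eqref{e.SndPI} yields exactly $\sigma_\e^{-2}\sqrt\e\exp(C|\log\e|^{1/2})$.

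For the first derivative, starting from \eqref{0:1D-18} and introducing $A:=\fint_0^{1/\e}a^{-1}$, $B_f:=\fint_0^{1/\e}a^{-1}f(\e\cdot)$, $B_g:=\fint_0^{1/\e}a^{-1}g(\e\cdot)$, the four terms collapse into the clean factorization
\begin{equation*}
D_z I_\e=-\e\,\frac{\mone_{(0,1/\e)}(z)}{a(z)}\bigl(f(\e z)-A^{-1}B_f\bigr)\bigl(g(\e z)-A^{-1}B_g\bigr).
\end{equation*}
Writing $P(z):=f(\e z)-A^{-1}B_f$ and $Q(z):=g(\e z)-A^{-1}B_g$, the pointwise bound $|B_f|\le\|f\|_\infty A$ yields $|P|\le 2\|f\|_\infty$ (and similarly for $Q$), so that $\|DI_\e\|_{L^2}^2\le 4\|f\|_\infty^2\|g\|_\infty^2\,\e^2\!\int_0^{1/\e}a^{-2}$; taking a fourth moment via Lemma~\ref{lem:Minko} then gives $\expec{\|DI_\e\|_{L^2}^4}^{1/4}\lesssim\sqrt\e$.

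For the second derivative, the Leibniz rule together with $D_{z'}[A^{-1}B_f]=\e A^{-1}a(z')^{-1}\mone(z')P(z')$ (and its analogue for $B_g$) and $D_{z'}a(z)^{-1}=-a(z)^{-1}\delta(z-z')$ produces two structurally distinct pieces: a diagonal one proportional to $\e\,\delta(z-z')\,a(z)^{-1}\mone(z)P(z)Q(z)$, and an off-diagonal one proportional to $\e^2A^{-1}a(z)^{-1}a(z')^{-1}\mone(z)\mone(z')(P(z)Q(z')+P(z')Q(z))$. Testing against $\zeta(x)\zeta(y)$ with $\|\zeta\|_{L^{2,\infty}}=1$ and tiling $\R$ by unit intervals $\{I_k\}$, the bound $\sum_k(\sup_{I_k}|\zeta|)^2\lesssim\|\zeta\|_{L^{2,\infty}}^2$ shows the diagonal part is dominated by $\e\max_k\int_{I_k}\mone|PQ|/a$, while the off-diagonal part factorizes into a product of two linear functionals of $\zeta$ and is dominated, via Cauchy--Schwarz in $k$, by $\e^2A^{-1}\bigl(\sum_k(\int_{I_k}\mone|P|/a)^2\bigr)^{1/2}\bigl(\sum_k(\int_{I_k}\mone|Q|/a)^2\bigr)^{1/2}$.

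The main obstacle lies in the diagonal contribution. Because $a^{-1}$ is log-normal, each $\int_{I_k}a^{-1}$ possesses all polynomial moments but is unbounded, and a maximum over the $\sim 1/\e$ approximately independent unit windows blows up mildly. The log-normal tail estimate $\Pm(\int_{I_k}a^{-1}>s)\lesssim\exp(-c(\log s)^2)$ combined with a union bound yields $\expec{(\max_k\int_{I_k}a^{-1})^4}^{1/4}\lesssim\exp(C|\log\e|^{1/2})$, and this is precisely the source of the mild subpolynomial factor in the statement. The off-diagonal piece, by contrast, is estimated by direct fourth-moment bounds via Lemma~\ref{lem:Minko} (each of the $\sim 1/\e$ terms in the $k$-sums has bounded moments), and contributes only $O(\e)$, hence is subdominant. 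Combining both estimates yields the claimed control of $\expec{\3 D^2 I_\e\3_{1+}^4}^{1/4}$, and \eqref{e.SndPI} completes the proof.
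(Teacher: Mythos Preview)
Your proof is correct and follows the same overall architecture as the paper's: apply the second-order Poincar\'e inequality \eqref{e.SndPI}, bound the first Malliavin derivative as in Theorem~\ref{0:1D-thm-quant2}, and split the second derivative into a diagonal part (carrying the $\delta(z-z')$) and an off-diagonal part. The off-diagonal estimates are essentially equivalent.

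The genuine difference lies in the diagonal term. The paper bounds $\int\zeta^2\,a^{-1}K_{0,\e}$ by H\"older with an auxiliary exponent $q>1$, using the nestedness $\|\zeta\|_{L^{2q/(q-1),\infty}}\lesssim\|\zeta\|_{L^{2,\infty}}$ to reduce to $\e\,\|a^{-1}K_{0,\e}\|_{L^q}$; Lemma~\ref{lem:Minko} then gives $\e^{1-1/q}\exp(Cq)$, and optimizing $q\sim|\log\e|^{1/2}$ produces the sub-algebraic factor. Your route is instead probabilistic: tile $(0,1/\e)$ by unit intervals, bound the diagonal pairing by $\e\max_k\int_{I_k}a^{-1}$ (using $\sum_k(\sup_{I_k}|\zeta|)^2\lesssim 1$ and $|P|,|Q|\lesssim 1$), and then control this maximum of $\sim 1/\e$ log-normal-type variables by the tail bound $\Pm(\int_{I_k}a^{-1}>s)\lesssim\exp(-c(\log s)^2)$ --- itself a direct consequence of the moment bounds in Lemma~\ref{eq:mom-a} via Markov and optimization in $p$ --- together with a union bound. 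Both mechanisms yield the same $\e\exp(C|\log\e|^{1/2})$. Your argument makes the origin of the loss transparent (maximum over many weakly dependent blocks), while the paper's argument stays entirely within the moment framework of Lemma~\ref{lem:Minko} and needs no separate tail estimate.
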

\begin{rem}
The sub-algebraic correction $\exp(C |\log \e|^\frac12)$ (it is asymptotically smaller than any negative power of $\e$) is due to the (relatively) poor stochastic integrability of log-normal fields. If $\frac1a$ was bounded from above and isolated from zero, there would be no such correction. 
\end{rem}
These two propositions combine into the following quantitative central limit theorem.
\begin{theor}\label{0:1D-thm-quant3}
Let $\beta>1$, and $G \not\equiv 0$.
Let $f,g \in C^1([0,1])$, let $u_\e$ be the solution of  \eqref{0:1D-1},  $v_\e$
be the solution of  \eqref{0:1D-1} for $f$ replaced by $g$,  set $I_\e(f,g):=\int_0^1 u_\e'(x)g(x)dx=-\int_0^1 v_\e'(x)f(x)dx$ and $\sigma_\e(f,g):=\e^{-1}\var{I_\e(f,g)}$, 
and let $\sigma(f,g)$ be given by \eqref{0:1D-lim-var}.
If both $f$ and $g$ are not constant (otherwise $u_\e\equiv 0$ or $v_\e\equiv 0$ so that $I_\e(f,g)\equiv 0$), then $\sigma(f,g) >0$ and 
$I_\e(f,g)$ satisfies
\begin{equation}\label{0:1D-20}
d_{TV}\Big(\frac{I_\e(f,g)-\expec{I_\e(f,g)}}{\sqrt\e \sigma(f,g)},\mathcal N \Big) \,\lesssim_{f,g}\, \sqrt \e \exp(C|\log \e|^\frac12)+ \e^{\beta-1}.
\end{equation}
\end{theor}

\subsubsection{Covariance structure: proof of Proposition~\ref{prop:cov-int}}

By Theorem~\ref{0:1D-thm-quant2}  the empirical variance
$
\sigma_\e^2(f,g):=\e^{-1}\var{I_\e(f,g)},
$
is of order 1.
All the terms in $I_\e(f,g)$ are integrals of $\frac1a$ with a test function, except the term $\Big(\fint_0^\frac1\e\frac1a\Big)^{-1}$ (it is the inverse
of an average).
In the first  step, we show that this term can be rewritten as an average of $\frac1a$ up to a small error.
In the second  step, we then argue that $\sigma_\e^2(f,g)$ can be 
approximated by the variance of a sum of four spatial averages of $\frac1a$, which we then expand as the sum of 16 terms.
Each of the latter is a covariance of spatial averages of $\frac1a$, which we then analyze.
We then prove~\eqref{e.conv-cov-int}, and conclude with the non-degeneracy.

\medskip

\step{1} Proof of the identity
\begin{equation}\label{0:1D-rewriting-var0}
\frac1{\fint_0^{\frac1\e} \frac1a }\,=\,\frac{2\expec{\frac1a}-\fint_0^{\frac1\e} \frac1a}{\expec{\frac1a}^2}+\mu_\e, \quad \mu_\e\,:=\,\frac{(\expec{\frac1a}-\fint_0^{\frac1\e} \frac1a)^2}{\expec{\frac1a}^2\fint_0^\frac1\e \frac1a}
\end{equation}
where 
$\mu_\e$
satisfies
\begin{equation}\label{0:1D-rewriting-var1}
\expec{\mu_\e^2}\lesssim \e^2.
\end{equation}
Before we prove \eqref{0:1D-rewriting-var0} and  \eqref{0:1D-rewriting-var1}, let us motivate the form of \eqref{0:1D-rewriting-var0}.
We are interested in variances.
Since $\fint_0^{\frac1\e} \frac1a$ is close to $\expec{\frac1a}$ and its variance is of order $\e$,
we have $\fint_0^{\frac1\e} \frac1a=\expec{\frac1a}+b_\e$ for some $b_\e$ with $\expec{b_\e^2}\lesssim \e$, and \eqref{0:1D-rewriting-var0} is
nothing but the Taylor-expansion $\Big(\expec{\frac1a}+b_\e\Big)^{-1}=- b_\e \expec{\frac{1}{a}}^{-2}+O(\e)$, where the remainder (and the norm
in which it is small --- the second stochastic moment) is made precise.

\medskip

The identity~\eqref{0:1D-rewriting-var0} is similar to \eqref{0:1D-transfo-inverse}:
$$
\frac1{\fint_0^{\frac1\e} \frac1a }-\frac1{\expec{\frac1a}}= \frac{\expec{\frac1a}-\fint_0^{\frac1\e} \frac1a}{\expec{\frac1a}\fint_0^{\frac1\e} \frac1a} 
\,=\,  \frac{\expec{\frac1a}-\fint_0^{\frac1\e} \frac1a}{\expec{\frac1a}^2} - \frac{\expec{\frac1a}-\fint_0^{\frac1\e} \frac1a}{\expec{\frac1a}}\Big(\frac1{\expec{\frac1a}}-\frac1{\fint_0^{\frac1\e} \frac1a} \Big).
$$
We have already encountered \eqref{0:1D-rewriting-var1}, which is a reformulation of \eqref{e.2scquant-1} using Lemma~\ref{eq:mom-a} and Cauchy-Scwharz' inequality.

\medskip

\step{2} Decomposition of $\sigma_\e^2(f,g)$ and proof of 
\begin{equation}\label{0:1D-thm5-1.2}
|\sigma_\e^2(f,g)-\var{X_{1,\e}+X_{2,\e}-X_{3,\e}-X_{4,\e}}| \,\lesssim \, \sqrt\e,
\end{equation}
where 
\begin{equation*}
\begin{array}{ll}
X_{1,\e}\,:=\,\e^{-\frac12} \fint_0^{\frac1\e} \frac1{a(x)}f(\e x)g(\e x)dx, & X_{2,\e}\,:=\,\e^{-\frac12}\Big( \fint_0^{\frac1\e} \frac1{a} \Big)\int_0^1f\int_0^1 g,\\
X_{3,\e}\,:=\,\e^{-\frac12} \Big(\fint_0^{\frac1\e} \frac1{a(x)}g(\e x)dx\Big)\int_0^1f, & X_{4,\e}\,:=\,\e^{-\frac12}\Big( \fint_0^{\frac1\e} \frac1{a(x)}f(\e x)dx \Big)\int_0^1  g.
\end{array}
\end{equation*}
Starting point is the following observation for five bounded random variables $\{\alpha_i\}_{0\le i\le 4}$
of expectations $\{\bar \alpha_i\}_{0\le i\le 4}$: if $\alpha_0=\alpha_1-\alpha_2\alpha_3\alpha_4$ then
\begin{eqnarray*}
\alpha_0-\bar \alpha_0&=&(\alpha_1-\bar \alpha_1)-(\alpha_2-\bar \alpha_2)\bar \alpha_3 \bar \alpha_4-(\alpha_3-\bar \alpha_3)\bar \alpha_2 \bar \alpha_4-(\alpha_4-\bar \alpha_4)\bar \alpha_2 \bar \alpha_3
\\
&&-(\alpha_2-\bar \alpha_2)(\alpha_3-\bar \alpha_3)  \alpha_4-(\alpha_2-\bar \alpha_2)(\alpha_4-\bar \alpha_4)  \bar \alpha_3-(\alpha_3-\bar \alpha_3)(\alpha_4-\bar \alpha_4)  \bar \alpha_2
\\
&&-\bar \alpha_2\bar \alpha_3\bar \alpha_4-\bar \alpha_0,
\end{eqnarray*}
so that by H\"older's inequality in probability (and using the boundedness assumption) 
\begin{equation}\label{0:1D-prod-sum}
|\var{\alpha_0}-\var{\alpha_1-\alpha_2\bar \alpha_3 \bar \alpha_4-\alpha_3\bar \alpha_2 \bar \alpha_4-\alpha_4\bar \alpha_2 \bar \alpha_3}|
\,\lesssim\, \sum_{i=1}^4 \expec{|\alpha_i-\bar \alpha_i|^3}.
\end{equation}
By the definition \eqref{0:1D-19} of $I_\e(f,g)$, we have
$
\sigma_\e^2(f,g) = \e^{-1} \var{\alpha_1-\alpha_2\alpha_3\alpha_4}
$
with $\alpha_1=\fint_0^{\frac 1\e} \frac{1}{a(x)}f(\e x)g(\e x)dx$, $\alpha_2=(\fint_0^{\frac1\e} \frac1a)^{-1}$, $\alpha_3= \fint_0^{\frac 1\e}\frac{1}{a(y)}g(\e y)dy$,
and  $\alpha_4=\fint_0^{\frac 1\e} \frac1{a(y)}f(\e y)dy$.
By~\eqref{0:1D-prod-sum}, this implies
$$
|\sigma_\e^2(f,g) - \e^{-1} \var{\alpha_1-\alpha_2\bar \alpha_3 \bar \alpha_4-\alpha_3\bar \alpha_2 \bar \alpha_4-\alpha_4\bar \alpha_2 \bar \alpha_3}|\,\lesssim\, \e^{-1} \sum_{i=1}^4 \expec{|\alpha_i-\bar \alpha_i|^3}.
$$
As a by-product of the proof of Theorem~\ref{0:1D-thm-quant1} we have $\expec{|\alpha_i-\bar \alpha_i|^3}\lesssim \sqrt{\e}^3$ for all $1\le i\le 4$ so that
the above turns into 
$$
|\sigma_\e^2(f,g) - \e^{-1} \var{\alpha_1-\alpha_2\bar \alpha_3 \bar \alpha_4-\alpha_3\bar \alpha_2 \bar \alpha_4-\alpha_4\bar \alpha_2 \bar \alpha_3}|\,\lesssim\, \sqrt{\e}.
$$
It remains to argue that one can replace $(\alpha_2,\bar \alpha_2)$ by $(\beta_2,\bar \beta_2)=(2\bar a - {\bar a}^2\fint_0^{\frac1\e} \frac1a,\bar a)$.
Indeed, for any three random variables $Y_1,Y_2,Y_3=Y_1-Y_2$, we have
$
\var{Y_1}=\var{Y_2+Y_3}=\var{Y_2}+\var{Y_3}+2\cov{Y_2}{Y_3},
$
so that by Cauchy-Schwarz' inequality
$$
|\var{Y_1}-\var{Y_2}|\le \var{Y_3}+2\var{Y_2}^\frac12\var{Y_3}^\frac12.
$$
We then apply this inequality with  $Y_1=\alpha_1-\alpha_2\bar \alpha_3 \bar \alpha_4-\alpha_3\bar \alpha_2 \bar \alpha_4-\alpha_4\bar \alpha_2 \bar \alpha_3$, $Y_2=\alpha_1-\beta_2\bar \alpha_3 \bar \alpha_4-\alpha_3\bar \beta_2 \bar \alpha_4-\alpha_4\bar \beta_2 \bar \alpha_3$ and $Y_3=Y_1-Y_2=-(\alpha_2-\beta_2)\bar \alpha_3 \bar \alpha_4-\alpha_3(\bar \alpha_2-\bar \beta_2) \bar \alpha_4-\alpha_4(\bar \alpha_2-\bar \beta_2) \bar \alpha_3$.
On the one hand, by the boundedness of the $\alpha$'s, and  \eqref{0:1D-rewriting-var0} \&~\eqref{0:1D-rewriting-var1} in Step~1,
$$
\var{Y_3}\le \expec{Y_3^2} \lesssim \expec{\mu_\e^2} \,\lesssim\, \e^2.
$$
On the other hand, by the proof of Theorem~\ref{0:1D-thm-quant1} as above and the boundedness of the $\bar \alpha$'s, 
$$
\var{Y_2} \lesssim \sum_i \var{\alpha_i} \,\lesssim\, \e.
$$
Hence, the combination of the last four estimates yields
$$
|\sigma_\e^2(f,g) - \e^{-1} \var{\alpha_1-\beta_2\bar \alpha_3 \bar \alpha_4-\alpha_3\bar \beta_2 \bar \alpha_4-\alpha_4\bar \beta_2 \bar \alpha_3}|\,\lesssim\, \sqrt{\e},
$$
which can be reformulated in form of~\eqref{0:1D-thm5-1.2}.

\medskip

Since all the $X_{i,\e}$ are spatial averages of $\frac1a$ with deterministic test-functions, it is enough
to study the covariance of such averages to characterize the asymptotic behavior of $\sigma_\e(f,g)$, which
we do in the following step.

\medskip

\step{3} Proof of~\eqref{e.conv-cov-int} and \eqref{0:1D-lim-var}.

We claim that it is enough to establish that for all $h \in C^1([0,1])$, we have
\begin{equation}\label{0:1D-thm5-1.3+}
\Big|\e^{-1}\var{\fint_0^\frac1\e \frac1{a(x)} h (\e x)dx}- \calQ \int_0^1 h^2\Big|\,\lesssim\, \|h\|_{C^1([0,1])}^2\e^{1 \wedge (\beta-1)}.
\end{equation}
By polarization, this indeed yields for all $h_1,h_2 \in  C^1([0,1])$
\begin{multline}\label{0:1D-thm5-1.3}
\Big|\e^{-1}\cov{\fint_0^\frac1\e \frac1{a(x)} h_1(\e x)dx}{\fint_0^\frac1\e \frac1{a(x)} h_2(\e x)dx}- \calQ \int_0^1 h_1h_2\Big|\\
\lesssim\,( \|h_1\|_{C^1([0,1])}^2+\|h_2\|_{C^1([0,1])}^2 )\e^{1 \wedge (\beta-1)},
\end{multline}
so that, by \eqref{0:1D-thm5-1.2} and applying \eqref{0:1D-thm5-1.3} to each of the 16 terms $\cov{X_{i,\e}}{X_{j,\e}}$ for $1\le i,j\le 4$,
we obtain the estimate
\begin{equation}\label{0:1D-thm5-1.4}
|\sigma_\e^2(f,g)- \sigma^2(f,g)|\,\lesssim\, ( \|h_1\|_{C^1([0,1])} +\|h_2\|_{C^1([0,1])}  )\e^{\frac12 \wedge(\beta-1)}
\end{equation}
where 
$$
\sigma^2(f,g)=\calQ(\overline{f^2g^2}-3\bar f^2\bar g^2+\bar f^2 \overline{g^2}+\overline{f^2}\bar g^2+4\overline{fg}\bar f\bar g-2\overline {fg^2}\bar f-2\overline{f^2g}\bar g),
$$
and $\overline h$ is a short-hand notation for $\int_0^1h$. 
It remains to notice that this definition of $\sigma^2(f,g)$ coincides with \eqref{0:1D-lim-var} (by expanding the product).

\medskip

We now turn to the proof of \eqref{0:1D-thm5-1.3+}. 
Set $c:x\mapsto \cov{\frac1a(x)}{\frac1a(0)}$. 
By the Helffer-Sj\"ostrand representation formula in the weaker form of the upper bound
$$
|\cov{X}{Y}|\,\lesssim\, \iint_{\R\times \R} \expec{|D_{z_1} X|^2}^\frac12\expec{|D_{z_2}Y|^2}^\frac12|\calC(z_1-z_2)|dz_1dz_2,
$$
we obtain for $X=\frac1a(x)$ and $Y=\frac1a(0)$ using Lemma~\ref{eq:mom-a}
\begin{eqnarray*}
|c(x)|&\lesssim&\iint_{\R\times \R} \expec{\exp(2G)}  \delta(z_1-x)\delta(z_2) |\calC(z_1-z_2)|dz_1dz_2 
\\
&\lesssim&  |\calC(x)|,
\end{eqnarray*}
so that $c$ has the same decay properties as $\calC$. The use of the Helffer-Sj\"ostrand representation formula (or here, covariance estimate) is definitely an overkill in dimension 1, and \eqref{0:1D-thm5-1.3+} can be obtained directly as a consequence of the explicit formula \eqref{e:fo-cov} below.

\medskip

Extend $h$ by zero outside $[0,1]$. 
By splitting $\int h^2 = \frac12 \int_\R( h(x_1)^2+h(x_1+\e x_2)^2) dx_1$ for any $x_2 \in \R$
and reconstructing a square,
a direct calculation yields
\begin{eqnarray}
\lefteqn{\Big|\e^{-1}\var{\fint_0^\frac1\e \frac1{a(x)} h(\e x)dx}-\calQ\int h^2\Big|}\nonumber 
\\
&=&  \Big|\iint_{\R\times \R}   h(x_2+\e x_1)h(x_2) c(x_1)dx_1dx_2-\frac12 \iint_{\R\times \R} (h(x_2)^2+h(x_2+\e x_1)^2)c(x_1)dx_2dx_1\Big|\nonumber
\\
&\le &\frac12 \Big|\iint_{\R\times \R} (h(x_2)-h(x_2+\e x_1))^2 c(x_1)dx_2dx_1\Big|.\label{e.cov-conv0}
\end{eqnarray}
Since $h$ is smooth in $C^1([0,1])$ and vanishes outside $[0,1]$, we have for all $x_1,x_2$%
\begin{multline*}
|h(x_2)-h(x_2+\e x_1)| \le \e |x_1| \| h'\|_{L^\infty}^2 \mathds 1_{x_2 \in [0,1]} \mathds 1_{x_2 + \e x_1 \in [0,1]} 
\\
+ \|h\|_{L^\infty}^2 \Big(\mathds 1_{x_2 \notin [0,1]} \mathds 1_{x_2 +\e x_1 \in [0,1]}+\mathds 1_{x_2 \in  [0,1]} \mathds 1_{x_2 +\e x_1 \notin [0,1]} \Big),
\end{multline*}
so that
\begin{multline*}
\iint_{\R\times \R} (h(x_2)-h(x_2+\e x_1))^2 |c(x_1)|dx_2dx_1
\,\lesssim_h \, \int_{x_2 \in [0,1]}\int_{x_2+\e x_1 \in [0,1]} \e  |c(x_1)| dx_1dx_2
\\
+ \int_{x_2 \in [0,1]} \int_{x_2+\e x_1 \notin [0,1]} |c(x_1)| dx_1 dx_2
+ \int_{x_2 \notin [0,1]} \int_{x_2+\e x_1 \in [0,1]} |c(x_1)| dx_1 dx_2 . 
\end{multline*}
Using that $|c(x)|\lesssim |\calC(x)|\lesssim (1+|x|)^{-\beta}$, this yields
$$
\iint_{\R\times \R} (h(x_2)-h(x_2+\e x_1))^2 |c(x_1)|dx_2dx_1\,\lesssim\,\|h\|_{C^1([0,1])}^2 (\e+\e^{\beta-1}),
$$
and therefore~\eqref{0:1D-thm5-1.3}.

\medskip

\step4 Non-degeneracy of $\calQ$.

Recall that $\calQ=\int_\R \cov{\tfrac1a(x)}{\tfrac1a(0)}dx$ with $a(x)=\exp(G(x))$.
Whereas $\int_\R \calC \ge 0$ as the integral of a covariance, we have not assumed that $\int_\R \calC \ne 0$. As a consequence of our choice of log-normal coefficients, we shall prove that the stronger property $\calQ>0$ holds for the integral of the covariance of $\frac1a$.
We start with an explicit formula for $\cov{\tfrac1a(x)}{\tfrac1a(0)}$.
By stationarity of $G$, $\expec{\exp(-G(y))}=\expec{\exp(-G(0))}$ for all $y\in \R$, so that
\begin{eqnarray*}
\lefteqn{\cov{\tfrac1a(x)}{\tfrac1a(0)} }
\\
&=& \expec{\Big(\exp(- G(x))-\expec{\exp(-G(x))}\Big)
\Big(\exp(-G(0))-\expec{\exp(-G(0) )}\Big)}
\\
&=&\expec{\exp(-(G(0)+G(x)))}- \expec{\exp(-G(0))}^2.
\end{eqnarray*}
For $x$ fixed, the random variable $G(0)+G(x)$ is  Gaussian.
Its mean is zero and, by stationarity, its variance is given by 
$$
\expec{(G(0)+G(x))^2}\,=\, \var{G(0)} + \var{G(x)} +2\cov{G(x)}{G(0)} = 2 (\calC(0)+\calC(x)).
$$
The calculation~\eqref{e.calc-exp} for centered Gaussian random variables
then yields
\begin{equation*} 
\expec{\exp(-(G(0)+G(x)))} \,=\,  \exp(\calC(0)+\calC(x)),
\quad \expec{\exp(-G(0))}^2\,=\, \exp(\calC(0)),
\end{equation*}
which entails 
\begin{equation}\label{e:fo-cov}
\cov{\tfrac1a(x)}{\tfrac1a(0)}  =  \exp(\calC(0))(\exp(\calC(x))-1).
\end{equation}
By our assumption, $|\calC(x)|\lesssim (1+|x|)^{-\beta}$, so that there exists 
a constant $C>0$ such that for all $x\in \R$, $-C \le \calC(x)\le C$.
Since $\zeta:t \mapsto e^{t}-1$ is convex  and its second-derivative satisfies $\zeta''(t) \ge \exp(-C)=: \gamma>0$ on $[-C,C]$, Jensen's inequality for the strongly convex function $\zeta$ yields for all $T>0$
$$
\fint_{-T}^T (\exp(\calC(x))-1)dx \, \ge \, \exp\Big(\fint_{-T}^T \calC(x) dx\Big)-1+\frac\gamma2 \fint_{-T}^T \Big( \calC(x)-\fint_{-T}^T \calC\Big)^2dx.
$$
Since $\calC$ is integrable, we have 
$$
2T\exp\Big(\fint_{-T}^T \calC(x) dx\Big) =2T +\int_{-T}^T \calC +O(\frac1T),
$$
so that multiplying the above by $2T$ and taking the limit $T\uparrow +\infty$ yield
$$
\int_\R (\exp(\calC(x))-1)dx \,\ge\, \int_\R \calC+\frac\gamma2\int_\R \calC^2.
$$
By stationarity, the decay of $\calC$ and the Lebesgue dominated convergence theorem,
we have 
\begin{multline*}
0 \le 2L \var{\fint_{-L}^L G(x)dx} = 2L\fint_{-L}^L \fint_{-L}^L \cov{G(x)}{G(x')}dxdx'
\\
= 2L \fint_{-L}^L \fint_{-L}^L \calC(x-x') dxdx' \stackrel{L\uparrow +\infty}\longrightarrow \int_\R \calC ,
\end{multline*}
which implies that $\int_\R \calC+\frac\gamma2\int_\R \calC^2>0$, and
thus the claimed positivity of 
$$\calQ=\exp(\calC(0)) \int_{-\infty}^\infty (\exp(\calC(x))-1)dx \ge \exp(\calC(0))\Big(\int_\R \calC+\frac\gamma2\int_\R \calC^2\Big)>0.$$

\subsubsection{Asymptotic normality: proof of Proposition~\ref{prop:NA-int}}

\noindent To prove that fluctuations are asymptotically Gaussian, we use the Stein-Malliavin method.
We split the proof into two steps.  

\medskip

\step{1} Formulas for the first and second Malliavin derivatives of $I_\e(f,g)$.
\\
The Malliavin derivative is given by \eqref{0:1D-18} in the form
\begin{equation*}  
D_z I_\e(f,g)=\e \frac{1}{a(z)}K_{0,\e}^z(f,g),
\end{equation*}
where $K_{0,\e}^z(f,g)$ reads
\begin{eqnarray*}  
K_{0,\e}^z(f,g)&=&- f(\e z)g(\e z) \mathds{1}_{z\in (0,\frac1\e)}  +f(\e z)  \mathds{1}_{z\in (0,\frac1\e)} \bigg(\fint_0^{\frac1\e} \frac1{a( y)}dy\bigg)^{-1}  \fint_0^{\frac 1\e}\frac{1}{a(y)}g(\e y)dy 
\\
&&+g(\e z)  \mathds{1}_{z\in (0,\frac1\e)} \bigg(\fint_0^{\frac1\e} \frac1{a( y)}dy\bigg)^{-1} \fint_0^{\frac 1\e} \frac1{a(y)}f(\e y)dy
\\
&&- \mathds{1}_{z\in (0,\frac1\e)}\bigg(\fint_0^{\frac1\e} \frac1{a( y)}dy\bigg)^{-2}  \fint_0^{\frac 1\e}\frac{1}{a(y)}g(\e y)dy\fint_0^{\frac 1\e} \frac1{a(y)}f(\e y)dy.
\end{eqnarray*}
Differentiating again and using the Leibniz rule, this yields for the second Malliavin derivative
\begin{equation}  \label{0:1D-Mall-dec-fin}
D^2_{z,z'} I_\e(f,g)=-\e\delta(z-z') \frac{1}{a(z)}K_{0,\e}^z(f,g)
+\e^2 \frac{1}{a(z)a(z')} K_{1,\e}^{z,z'}(f,g),
\end{equation}
where $K_{1,\e}^{z,z'}(f,g)$ is given by  
\begin{eqnarray*}
K_{1,\e}^{z,z'}(f,g)&=&
-2f(\e z)  \mathds{1}_{z\in (0,\frac1\e)} \mathds{1}_{z'\in (0,\frac1\e)} \bigg(\fint_0^{\frac1\e} \frac1{a( y)}dy\bigg)^{-2}  \fint_0^{\frac 1\e}\frac{1}{a(y)}g(\e y)dy 
\\
&&-2 g(\e z)\mathds{1}_{z\in (0,\frac1\e)} \mathds{1}_{z'\in (0,\frac1\e)} \bigg(\fint_0^{\frac1\e} \frac1{a( y)}dy\bigg)^{-2}  \fint_0^{\frac 1\e}\frac{1}{a(y)}f(\e y)dy 
\\
&&+(f(\e z)g(\e z')+f(\e z')g(\e z)) \mathds{1}_{z\in (0,\frac1\e)} \mathds{1}_{z'\in (0,1)} \bigg(\fint_0^{\frac1\e} \frac1{a( y)}dy\bigg)^{-1}
\\
&&+2 \mathds{1}_{z\in (0,\frac1\e)} \mathds{1}_{z'\in (0,\frac1\e)}\bigg(\fint_0^{\frac1\e} \frac1{a( y)}dy\bigg)^{-3}  \fint_0^{\frac 1\e}\frac{1}{a(y)}g(\e y)dy\fint_0^{\frac 1\e} \frac1{a(y)}f(\e y)dy.
\end{eqnarray*}

\medskip

\step{2} Application of the second-order Poincar\'e inequality~\eqref{e.SndPI}.
\\
By~\eqref{e.SndPI} we have (as soon as $\sigma_\e(f,g)\ne 0$)
\begin{equation}\label{e.decompX1X2}
d_{TV}\Big(\frac{I_\e(f,g)-\expec{I_\e(f,g)}}{\sqrt \e \sigma_\e(f,g)},\Nc\Big)
\,\lesssim\, \tfrac1\e \sigma_\e(f,g)^{-2}\,\expec{\3D^2I_\e(f,g)\3_{\beta}^4}^\frac14\expec{\|DI_\e(f,g)\|_\beta^4}^\frac14.
\end{equation}
As in the proof of Theorem~\ref{0:1D-thm-quant2} we have
\begin{equation}\label{e.controlX0}
\expec{\|DI_\e(f,g)\|_\beta^4}^\frac14 \,\lesssim \, \sqrt \e \|f\|_{L^4(0,1)}\|g\|_{L^4(0,1)},
\end{equation}
and it remains to estimate the operator norm of the second Malliavin derivative.
For $\beta>1$, recall that 
$$
\3 X \3_\beta = \sup  \Big\{ \Big|\iint_{\R \times \R}    \zeta(x)  \zeta(y) X(x,y)dxdy\Big|\,:\,  \|\zeta\|_{L^{2,\infty}(\R)}=1  \Big\}  .
$$
We shall treat this norm differently for the two RHS terms of~\eqref{0:1D-Mall-dec-fin}.
For the first term $X_1:=-\e\delta(z-z') \frac{1}{a(z)}K_{0,\e}^z(f,g)$, we have
$$
\3 X_1 \3_\beta \,=\, \e \sup  \Big\{ \Big|\int_{\R}    \zeta^2(z) \frac{1}{a(z)}K_{0,\e}^z(f,g) dz\Big|\,:\,  \|\zeta\|_{L^{2,\infty}(\R)}=1  \Big\}.
$$
We then take local suprema of $\zeta$ and use H\"older' inequality with exponent $q> 1$ to the effect of
\begin{equation*}
\Big|\int_{\R}    \zeta^2(z) \frac{1}{a(z)}K_{0,\e}^z(f,g) dz\Big|
\, \le\, \Big(\int_\R \big( \sup_{B(z)} |\zeta|\big)^\frac{2q}{q-1}dz\Big)^{ 1-\frac 1q} \Big(\int_{\R} \frac{1}{a(z)^q}\big(K_{0,\e}^z(f,g)\big)^q dz\Big)^\frac1q.
\end{equation*}
Using that the spaces $L^{p,\infty}(\R)$ are nested (for the same reason as for the discrete $\ell^p(\Z)$ spaces) in form of $\|\zeta\|_{L^{\frac{2q}{q-1},\infty}(\R)}\lesssim \|\zeta\|_{L^{2,\infty}(\R)} = 1$, this yields
$$
\3 X_1 \3_\beta \,\lesssim\, \e \|\tfrac{1}{a}K_{0,\e}^z(f,g)\|_{L^q(\R)},
$$
so that, by a suitable use of Lemma~\ref{lem:Minko}, we obtain for some finite constant $C>0$ (independent of $q$)
$$
\expec{\3 X_1 \3_\beta^4}^\frac14 \,\lesssim\, \e \expec{ \|\tfrac{1}{a}K_{0,\e}^z(f,g)\|_{L^q(\R)}^4}^\frac14 \,\lesssim\, \e^{1-\frac1q} \exp(Cq) \|f\|_{L^4(0,1)}\|g\|_{L^4(0,1)}.
$$
Let us now optimize in $q> 1$ by choosing $q\sim |\log \e|^\frac12$, so that
\begin{equation}\label{e.controlX1}
\expec{\3 X_1\3_\beta^4}^\frac14 \,\lesssim\, \e \exp(C|\log\e|^\frac12) \|f\|_{L^4(0,1)}\|g\|_{L^4(0,1)}.
\end{equation}
We now turn to the second RHS term $X_2:=\e^2 \frac{1}{a(z)a(z')} K_{1,\e}^{z,z'}(f,g)$
 of~\eqref{0:1D-Mall-dec-fin}, for which we have
$$
\3 X_2 \3_\beta = \e^2  \sup  \Big\{ \Big|\iint_{\R \times \R}    \zeta(z)  \zeta(z')  \frac{1}{a(z)a(z')} K_{1,\e}^{z,z'}(f,g)dzdz'\Big|\,:\,  \|\zeta\|_{L^{2,\infty}(\R)}=1  \Big\} .
$$
By Cauchy-Schwarz' inequality, this directly implies
$$
\3 X_2 \3_\beta \le \e^2 \Big( \iint_{\R \times \R}   \frac{1}{a(z)^2a(z')^2} (K_{1,\e}^{z,z'}(f,g))^2dzdz'\Big)^\frac12,
$$
and therefore, using Lemma~\ref{lem:Minko} again,
\begin{equation}\label{e.controlX2}
\expec{\3 X_2 \3_\beta^4}^\frac14 \,\lesssim \, \e  \|f\|_{L^4(0,1)} \|g\|_{L^4(0,1)} .
\end{equation}
The combination of \eqref{e.controlX0}, \eqref{e.controlX1}, and \eqref{e.controlX2} then allows to turn \eqref{e.decompX1X2}
into the claim~\eqref{e:NA-int}.

\subsubsection{Quantitative CLT: proof of Theorem~\ref{0:1D-thm-quant3}}

In view of Proposition~\ref{prop:NA-int}, it remains to replace $\sigma_\e(f,g)$
by $\sigma(f,g)$ in \eqref{e:NA-int} using Proposition~\ref{prop:cov-int}.

\medskip

Since $\sigma(f,g)>0$ and $\sigma_\e(f,g)\ge 0$,  by \eqref{0:1D-thm5-1.4} 
we have 
\begin{equation}\label{0:1D-thm5-3}
|\frac{\sigma_\e(f,g)}{\sigma(f,g)}-1|=\frac{|\sigma_\e^2(f,g)-\sigma^2(f,g)|}{\sigma(f,g)(\sigma_\e(f,g)+\sigma(f,g))} \,\le \, \frac{|\sigma_\e^2(f,g)-\sigma^2(f,g)|}{ \sigma^2(f,g)} \,\lesssim\,\e^{\frac12 \wedge (\beta-1)}.
\end{equation}
Next, we use the definition of the total variation distance. 
Let $\nu_\e$ denote the law of $\frac{I_\e(f,g)- \expec{I_\e(f,g)}}{\sqrt\e\sigma_\e(f,g)}$ and $\tilde \nu_\e$
denote the law of $\frac{I_\e(f,g)- \expec{I_\e(f,g)}}{\sqrt\e\sigma(f,g)}$.
Then for all Borel sets $A$ of $\R$ we have $\tilde \nu_\e(A)= \nu_\e(\frac{\sigma_\e(f,g)}{\sigma(f,g)} A)$.
Let $\mathcal N_\tau$ be a centered Gaussian variable of variance $\tau^2>0$ on $\R$.
Its density is given by $t\mapsto \frac1{\tau\sqrt{2\pi}} e^{-\frac{t^2}{\tau^2}}$, so that for all $A$,
the change of variables  $t \leadsto  \frac s\tau$ implies the identity 
\begin{equation}\label{e.resc-N}
\mathcal N(\tau A):=\int_{\tau A} \frac1{\sqrt{2\pi}} e^{-t^2}dt = \int_{A} \frac1{\tau \sqrt{2\pi}} e^{-\frac{s^2}{\tau^2}}ds=\mathcal N_\tau (A)
\end{equation}
(where $\mathcal N$ still denotes the centered Gaussian variable of variance unity).
By  the triangle inequality we have for all $A$
\begin{eqnarray*}
|\tilde \nu_\e(A)-\mathcal N(A)|&=& | \nu_\e(\frac{\sigma_\e(f,g)}{\sigma(f,g)} A)-\mathcal N(A)|
\\
&\le&  | \nu_\e(\frac{\sigma_\e(f,g)}{\sigma(f,g)} A)-\mathcal N(\frac{\sigma_\e(f,g)}{\sigma(f,g)} A)|
+|\mathcal N(\frac{\sigma_\e(f,g)}{\sigma(f,g)} A)-\mathcal N(A)|
\\
&=& | \nu_\e(\frac{\sigma_\e(f,g)}{\sigma(f,g)} A)-\mathcal N(\frac{\sigma_\e(f,g)}{\sigma(f,g)} A)|
+|\mathcal N_{\tau_\e}(A)-\mathcal N(A)|
\end{eqnarray*}
with the notation $\tau_\e:=\frac{\sigma_\e(f,g)}{\sigma(f,g)}$.
Bounding the RHS by taking the supremum over $A$ in the RHS and using that the total variation distance between two probability laws $\nu$ and $\tilde \nu$ is precisely given by
$
d_{TV}(\nu,\tilde \nu)\,:=\,\sup_{A'} |\nu(A')-\tilde \nu('A)|,
$
where the supremum runs over all Borel sets $A'$ of $\R$,
this yields
$$
|\tilde \nu_\e(A)-\mathcal N(A)| \le d_{TV}(\nu_\e,\mathcal N)+d_{TV}(\mathcal N_{\tau_\e},\mathcal N).
$$
Hence, with \eqref{e:NA-int} to bound the first RHS term, and estimating the distance between Gaussian laws via
$$
d_{TV}(\mathcal N_{\tau_\e},\mathcal N)\,=\,\frac12 \int_\R |\frac1{\tau_\e\sqrt{2\pi}} e^{-\frac{t^2}{\tau_\e^2}}-\frac1{\sqrt{2\pi}} e^{-{t^2}}|dt \,\lesssim\, 
|1-\tau_\e| \,\stackrel{\eqref{0:1D-thm5-3}}{\lesssim}\, \e^{\frac12 \wedge (\beta-1)},
$$
 the theorem follows.

\subsection{Scaling law for non-integrable correlations}

In this paragraph we place ourselves in the case of non-integrable covariance function, that is, for $\beta\le1$.
Whereas the normal approximation result and its proof are quite robust, we do not necessarily have existence of a unique limiting covariance structure. To have such a property we further assume the asymptotic homogeneity of the covariance function $\calC$.
\begin{hypo}[Asymptotic homogeneity of $\calC$]\label{hypo:as-ho}
Let $\gamma:[0,+\infty) \to [0,+\infty)$ be bounded and such that $\lim_{t \to 0} \gamma(t)=0$.
For $\beta=1$, we assume there exists $\bar \calC \in \R$ such that for all $L>0$
$$
\Big| \frac1{\log L} \int_{-L}^L \calC(x)dx - \bar \calC \Big| \le \gamma(\tfrac1L).
$$
For $0<\beta<1$, we assume there exist $\bar \calC^+,\bar\calC^- \in \R$ such that for all $x\ge 0$   
$$
\big| x^\beta \calC(\pm x) - \bar \calC^{\pm}\big| \le \gamma(\tfrac1x).
$$
\end{hypo} 
\begin{prop}\label{prop:cov-non-int}
Let $\beta\le 1$,  $G \not\equiv 0$ (otherwise $a \equiv 1$ is constant), and assume Hypothesis~\ref{hypo:as-ho}.
Let $f,g \in C^1([0,1])$, let $u_\e$ be the solution of  \eqref{0:1D-1},  $v_\e$
be the solution of  \eqref{0:1D-1} for $f$ replaced by $g$, and set $I_\e(f,g):=\int_0^1 u_\e'(x)g(x)dx=-\int_0^1 v_\e'(x)f(x)dx$ and  $\sigma_\e(f,g):=\pi(\e)^{-1} \sqrt{\var{I_\e(f,g)}}$.
Then we have 
\begin{equation}\label{e.conv-cov-nonint}
|\sigma_\e(f,g)^2-\sigma(f,g)^2|\,\lesssim\, \Pi_\beta(\e),
\end{equation}
where 
\begin{equation}
\Pi_\beta(\e):=
\begin{cases}
\e^{\beta\land (1-\beta)} + \inf_{\delta\in(0,1]}(\delta^{1-\beta} +\gamma (\delta^{-1}\e)) ,&\text{if $0<\beta<1$ and }\beta\neq \frac12,\\
\e^{\frac12}|\log\e| +  \inf_{\delta\in(0,1]}(\delta^{1-\beta} +\gamma (\delta^{-1}\e)) ,&\text{if $\beta=\frac12$},\\
|\log\e|^{-1}+\gamma(\e) ,&\text{if $\beta=1$},\\
\end{cases}
\end{equation}
and $\sigma^2(f,g):=  \calQ_\beta \Big((f-\bar f)(g-\bar g)\Big)$ with $\calQ_\beta$ a quadratic form defined as 
\begin{equation}
\calQ_\beta(h):=\exp(\calC(0))
\begin{cases}
\int_0^1\int_0^1 h(x)\frac{\bar \calC^{\sign(x-y)}}{|x-y|^{-\beta}}h(y)dxdy ,&\text{if $0<\beta<1$ },\\
\bar \calC \int_0^1 h^2(x)dx,&\text{if  $\beta=1$ }.\\
\end{cases}
\end{equation}
\end{prop}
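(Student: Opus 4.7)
The plan is to adapt the three-step structure of the proof of Proposition~\ref{prop:cov-int}, replacing the normalization $\e^{-1}$ by $\pi_\beta(\e)^{-2}$ throughout. Steps~1 and~2 (the linearization of $(\fint_0^{1/\e}\tfrac1a)^{-1}$ via \eqref{0:1D-rewriting-var0}, and the expansion of $\sigma_\e^2(f,g)$ as the variance of a linear combination of four spatial averages $X_{i,\e}$ of $\tfrac1a$) go through essentially unchanged: since by Theorem~\ref{0:1D-thm-quant1} each centered quantity $\alpha_i-\bar\alpha_i$ is of size $\pi_\beta(\e)$ in every $L^p(\Omega)$, the error terms (quadratic in the fluctuations, or involving $\mu_\e$) are controlled by higher powers of $\pi_\beta(\e)$ and are absorbed in the $\e^{\beta\wedge(1-\beta)}$ contribution of $\Pi_\beta(\e)$.

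After these reductions, the whole argument boils down to analyzing, for $h_1,h_2\in C^1([0,1])$, the rescaled covariance
$$
\Sigma_\e(h_1,h_2)\,:=\,\pi_\beta(\e)^{-2}\iint_{[0,1]^2} c\Big(\tfrac{x-y}{\e}\Big)h_1(x)h_2(y)\,dx\,dy,
$$
where $c(z):=\cov{\tfrac1{a}(z)}{\tfrac1{a}(0)}=\exp(\calC(0))(\exp(\calC(z))-1)$ by~\eqref{e:fo-cov}. The expansion $c=\exp(\calC(0))\calC+O(\calC^2)$ together with the bound $|c(z)|\lesssim (1+|z|)^{-\beta}$ drives the analysis. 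For $0<\beta<1$, I would split the integration domain at a free diagonal scale $\delta\in (0,1]$: on $\{|x-y|<\delta\}$ the pointwise bound yields a contribution of order $\delta^{1-\beta}+\e^{1-\beta}$; on $\{|x-y|>\delta\}$ one linearizes $c$ and applies Hypothesis~\ref{hypo:as-ho} at scale $|x-y|/\e\ge \delta/\e$ to approximate $\pi_\beta(\e)^{-2}c((x-y)/\e)$ by the limiting kernel $\exp(\calC(0))\bar\calC^{\sign(x-y)}|x-y|^{-\beta}$ with additive error $O(\gamma(\e/\delta))$, the quadratic-in-$\calC$ remainder being absorbed into $O(\e^\beta)$ (with a logarithmic loss at the borderline $\beta=\tfrac12$ coming from the critical integrability of $\calC^2\sim |x|^{-2\beta}$ near the diagonal). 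Optimizing over $\delta$ produces the claimed $\inf_\delta(\delta^{1-\beta}+\gamma(\e/\delta))$ contribution in $\Pi_\beta(\e)$.

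For $\beta=1$, the change of variables $r=(x-y)/\e$ turns $\Sigma_\e(h_1,h_2)$ into a logarithmic Ces\`aro-type mean of $c(r)$ against the slowly varying profile $x\mapsto \int h_1(x)h_2(x-r\e)\,dx$; the normalized-integral convergence $\frac1{\log L}\int_{-L}^L \calC\to \bar\calC$ from Hypothesis~\ref{hypo:as-ho}, combined with the already-integrable quadratic remainder $c-\exp(\calC(0))\calC=O(\calC^2)$, directly yields $\Sigma_\e(h_1,h_2)\to \exp(\calC(0))\bar\calC\int_0^1 h_1 h_2$ at rate $|\log\e|^{-1}+\gamma(\e)$, the $|\log\e|^{-1}$ loss stemming from the logarithm in the renormalization $\pi_\beta(\e)^{-2}$. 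The main obstacle is the two-scale analysis of the singular integral for $\beta<1$: one must simultaneously absorb the near-diagonal singularity of the limiting kernel $|x-y|^{-\beta}$ and control the asymptotic homogeneity error uniformly down to the scale $\e/\delta$, which is precisely what dictates the $\inf_\delta$ structure of $\Pi_\beta(\e)$. Once $\Sigma_\e(h_1,h_2)$ is identified for every test pair, expanding the sixteen covariances of the $X_{i,\e}$'s and collecting terms by polarization yields \eqref{e.conv-cov-nonint}, exactly as in Step~3 of the integrable case.
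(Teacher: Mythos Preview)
Your plan matches the paper's proof essentially line by line: the paper also invokes Steps~1--2 of Proposition~\ref{prop:cov-int} verbatim, uses the explicit identity~\eqref{e:fo-cov} together with the linearization $\exp(\calC)-1=\calC+O(\calC^2)$, splits at the diagonal scale $\delta$ and appeals to Hypothesis~\ref{hypo:as-ho} for $0<\beta<1$, and runs a Ces\`aro-type argument (an adaptation of~\eqref{e.cov-conv0}) for $\beta=1$. One small correction: the quadratic-in-$\calC$ remainder after renormalization is $O(\e^{\beta\wedge(1-\beta)})$, not $O(\e^\beta)$ --- for $\beta>\tfrac12$ the integral $\int_\R\calC^2$ converges and the contribution becomes $\e^{1-\beta}$, which is precisely the origin of the $\e^{\beta\wedge(1-\beta)}$ term in $\Pi_\beta(\e)$.
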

We now turn to the normal approximation.
\begin{prop}\label{prop:NA-non-int}
Let $\beta\le 1$.
Let $f,g \in C^1([0,1])$, let $u_\e$ be the solution of  \eqref{0:1D-1},  $v_\e$
be the solution of  \eqref{0:1D-1} for $f$ replaced by $g$, and set $I_\e(f,g):=\int_0^1 u_\e'(x)g(x)dx=-\int_0^1 v_\e'(x)f(x)dx$ and $\sigma_\e(f,g):=\pi_\beta(\e)^{-1}\sqrt{\var{I_\e(f,g)}}$.
Then, as long as $\sigma_\e(f,g)\ne 0$, we have
\begin{equation}\label{e:NA-non-int}
d_{TV}\Big(\frac{I_\e(f,g)-\expec{I_\e(f,g)}}{\pi_\beta(\e) \sigma_\e(f,g)},\mathcal N \Big) \,\lesssim_{f,g}\, \frac1{\sigma_\e(f,g)^2}\pi_\beta(\e),
\end{equation}
where $\mathcal N$ is a centered normal variable of variance unity,
and $d_{TV}$ denotes the total variation distance between probability measures.
\end{prop}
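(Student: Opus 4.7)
The plan is to apply the second-order Poincar\'e-Malliavin inequality~\eqref{e.SndPI} with $X=I_\e(f,g)$ and $\sigma=\pi_\beta(\e)\sigma_\e(f,g)$, exactly as in the proof of Proposition~\ref{prop:NA-int}. Since the formulas for $D_zI_\e(f,g)$ and $D^2_{z,z'}I_\e(f,g)$ derived in Step~1 of that proof in terms of $K_{0,\e}^z$ and $K_{1,\e}^{z,z'}$ are independent of $\beta$, they can be reused verbatim; only the estimation of the norms $\|\cdot\|_\beta$ and $\3\cdot\3_\beta$ changes, now using their $\beta\le 1$ form from~\eqref{e.Mallnorm}. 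For the first derivative, the same argument as in the proof of Theorem~\ref{0:1D-thm-quant2}---inserting the $L^{2/(2-\beta)}$ (resp.\ $w_c$-weighted $L^2$) definition and applying Lemma~\ref{lem:Minko} term by term to the four contributions of $K_{0,\e}^z$---yields
\[
\expec{\|DI_\e(f,g)\|_\beta^4}^{1/4}\,\lesssim\,\pi_\beta(\e)\,\|f\|_{L^4(0,1)}\,\|g\|_{L^4(0,1)},
\]
the factor $\pi_\beta(\e)$ coming from the $\e$ prefactor in~\eqref{0:1D-Mall-dec-fin} multiplied by an $L^q$ norm over a support of size $\e^{-1}$.

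The core of the argument is the bound $\expec{\3 D^2I_\e(f,g)\3_\beta^4}^{1/4}\lesssim \pi_\beta(\e)^2$. Splitting \eqref{0:1D-Mall-dec-fin} into the diagonal part $X_1=-\e\delta(z-z')a^{-1}(z)K_{0,\e}^z$ and the off-diagonal part $X_2=\e^2 a^{-1}(z)a^{-1}(z')K_{1,\e}^{z,z'}$, for $0<\beta<1$ the triple norm of $X_1$ reduces to $\e\sup\{|\int_\R\zeta^2(z)\,a^{-1}(z)K_{0,\e}^z(f,g)\,dz|:\|\zeta\|_{L^{2/\beta,\infty}}=1\}$; the key observation is that $\|\zeta^2\|_{L^{1/\beta}}\lesssim\|\zeta\|_{L^{2/\beta,\infty}}^2=1$, so H\"older with dual exponent $1/(1-\beta)$ combined with Lemma~\ref{lem:Minko} yields $\expec{\3X_1\3_\beta^4}^{1/4}\lesssim \e\cdot\e^{-(1-\beta)}=\e^\beta=\pi_\beta(\e)^2$. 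The endpoint case $\beta=1$ is handled by the same template, with the logarithmic weight $w_c$ built into $\3\cdot\3_1$ contributing the extra $|\log\e|$ factor that matches $\pi_1(\e)^2=\e|\log\e|$. The off-diagonal part $X_2$ is controlled by Cauchy--Schwarz together with Lemma~\ref{lem:Minko}, producing $\expec{\3X_2\3_\beta^4}^{1/4}\lesssim\e^{2\beta}$, which is strictly subdominant.

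Combining these bounds in~\eqref{e.SndPI} gives
\[
d_{TV}\Big(\frac{I_\e(f,g)-\expec{I_\e(f,g)}}{\pi_\beta(\e)\sigma_\e(f,g)},\Nc\Big)\,\lesssim\,\frac{1}{\pi_\beta(\e)^2\sigma_\e(f,g)^2}\cdot\pi_\beta(\e)^2\cdot\pi_\beta(\e)\,=\,\frac{\pi_\beta(\e)}{\sigma_\e(f,g)^2},
\]
which is the claimed~\eqref{e:NA-non-int}. The main obstacle is the boundary case $\beta=1$: for $\beta<1$ the dual H\"older exponent $1/(1-\beta)$ stays finite and the (relatively poor) stochastic integrability of $a^{-1}$ poses no difficulty, but as $\beta\uparrow 1$ this exponent diverges, and it is only the logarithmic weight built into~\eqref{e.Mallnorm} for $\beta=1$ that absorbs the loss. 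This is in contrast with the integrable case of Proposition~\ref{prop:NA-int}, where the optimization $q\sim|\log\e|^{1/2}$ produced the sub-algebraic correction $\exp(C|\log\e|^{1/2})$ that is not needed here.
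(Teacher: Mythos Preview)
Your approach is essentially the same as the paper's: apply~\eqref{e.SndPI}, reuse the formulas for $D$ and $D^2$ from the proof of Proposition~\ref{prop:NA-int}, and re-estimate the norms using the $\beta\le 1$ definitions. Your treatment of $X_1$ is in fact slightly cleaner than the paper's---you go directly to the exponent $1/(1-\beta)$, whereas the paper introduces a generic $q$ and then optimizes to $q=1/(1-\beta)$---but the substance is identical, including the key point that for $\beta<1$ this exponent is finite so the factor $\exp(Cq)$ is just a $\beta$-dependent constant and no sub-algebraic correction appears.

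One small correction: your claimed rate $\e^{2\beta}$ for $X_2$ is too optimistic. Whether you use Cauchy--Schwarz (paying a Hölder loss to embed $L^{2/\beta,\infty}$ into $L^2$ on the support $(0,1/\e)$) or the direct Hölder pairing with exponent $2/(2-\beta)$ as the paper does, the outcome is $\expec{\3X_2\3_\beta^4}^{1/4}\lesssim \e^\beta$, which for $\beta<1$ equals $\pi_\beta(\e)^2$---the \emph{same} order as $X_1$, not strictly subdominant. This does not affect the final bound, since $\e^\beta\le\pi_\beta(\e)^2$ is exactly what is needed, but the assertion that $X_2$ is a lower-order term is not correct.
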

As in the case of integrable covariance, these two propositions combine into the following quantitative central limit theorem.
\begin{theor}\label{0:1D-thm-quant3-non-int}
Let $\beta>1$, and $G \not\equiv 0$.
Let $f,g \in C^1([0,1])$, let $u_\e$ be the solution of  \eqref{0:1D-1},  $v_\e$
be the solution of  \eqref{0:1D-1} for $f$ replaced by $g$,  set $I_\e(f,g):=\int_0^1 u_\e'(x)g(x)dx=-\int_0^1 v_\e'(x)f(x)dx$ and $\sigma_\e(f,g):=\e^{-1}\var{I_\e(f,g)}$, 
and let $\sigma(f,g)$ be given by \eqref{0:1D-lim-var}.
If both $f$ and $g$ are not constant (otherwise $u_\e\equiv 0$ or $v_\e\equiv 0$ so that $I_\e(f,g)\equiv 0$), then $\sigma(f,g) >0$ and 
$I_\e(f,g)$ satisfies
\begin{equation}
d_{TV}\Big(\frac{I_\e(f,g)-\expec{I_\e(f,g)}}{\sqrt\e \sigma(f,g)},\mathcal N \Big) \,\lesssim_{f,g}\, \Pi_\beta(\e)+\pi_\beta(\e).
\end{equation}
\end{theor}

\subsubsection{Proof of Proposition~\ref{prop:cov-non-int}}

The same arguments as in the first two steps of the proof of Proposition~\ref{prop:cov-int}
yield, with the same notation, 
\begin{equation}\label{0:1D-thm5-1.2+}
|\sigma_\e^2(f,g)-\var{X_{1,\e}+X_{2,\e}-X_{3,\e}-X_{4,\e}}| \,\lesssim \,
\begin{cases}
\e^{\beta},&\text{if $0<\beta<1$},\\
 \e |\log(\e)| ,&\text{if $\beta=1$},\\
\end{cases}
\end{equation}
using Theorem~\ref{0:1D-thm-quant2} with $\beta \le 1$ rather than $\beta>1$.
It remains to establish a replacement for the estimate \eqref{0:1D-thm5-1.3+} valid for non-integrable covariance, and to investigate the non-degeneracy.
In this proof we choose to rely extensively on the explicit form of $c$ given
by \eqref{e:fo-cov} in the log-normal setting. 
This makes proofs shorter, but it is not essential.

\medskip

\step1 Proxy for \eqref{0:1D-thm5-1.3+}.
\begin{eqnarray*}
\var{\fint_0^\frac1\e \frac1{a(x)} h(\e x)dx}&=& \fint_0^\frac1\e\fint_0^\frac1\e c(x-y)h(\e x)h(\e y)dxdy
\\
&\stackrel{\eqref{e:fo-cov}}=& \exp(\calC(0))  \int_0^1\int_0^1 (\exp(\calC(\tfrac{x-y}\e))-1)h( x)h( y)dxdy.
\end{eqnarray*}
Since $\calC$ is bounded, we have $|\exp(\calC(x))-1-\calC(x)|\lesssim\calC(x)^2$, so that 
the above yields
\begin{multline*}
\Big|\var{\fint_0^\frac1\e \frac1{a(x)} h(\e x)dx}-\exp(\calC(0))  \int_0^1\int_0^1 \calC(\tfrac{x-y}\e)h( x)h( y)dxdy\Big|\\
\lesssim\, \int_0^1\int_0^1 \calC(\tfrac{x-y}\e)^2 |h( x)||h( y)|dxdy.
\end{multline*}
It remains to prove the following two estimates to conclude: for any $\delta\in(0,1]$, 
\begin{eqnarray*}
{\Big|\int_0^1\int_0^1 \e^{-\beta} \calC(\tfrac{x-y}\e)h( x)h( y)dxdy-\int_0^1\int_0^1 \frac{\bar \calC^{\sign (x-y)}}{|x-y|^{\beta}} h( x)h( y)dxdy\Big|}
&\lesssim& \delta^{1-\beta} +\gamma(\delta^{-1}\e),
\\
\int_0^1\int_0^1 \e^{-\beta} \calC(\tfrac{x-y}\e)^2|h( x)||h( y)|dxdy
&\lesssim & \e^{\beta\land(1-\beta)},
\end{eqnarray*}
where the second inequality comes with a correcting term $|\log \e|$ while $\beta=\frac12$. When $\beta<1$, the first inequality comes from direct calculation: extend $h$ by $0$ outside of $[0,1]$ and by change of variables $x_1=x$, $x_2=\frac{x-y}{\epsilon}$,
\begin{eqnarray*}
& &{\Big|\int_0^1\int_0^1 \e^{-\beta} \calC(\tfrac{x-y}\e)h( x)h( y)dxdy-\int_0^1\int_0^1 \frac{\bar \calC^{\sign (x-y)}}{|x-y|^{\beta}} h( x)h( y)dxdy\Big|} 
\\
&\le &\e^{1-\beta} {\int\int_{\R^2} \left|\calC(x_2)-\bar \calC^{\sign (x_2)}{|x_2|^{-\beta}}\right| |h(x_1)||h( x_1-\e x_2)|dx_1dx_2} 
\\
&\le &\e^{1-\beta} {\int\int_{\R^2}[\gamma(\delta^{-1}\e){|x_2|^{-\beta}}\mone_{|\e x_2|>\delta}+|x_2|^{-\beta}\mone_{|\e x_2|<\delta}]   |h(x_1)||h( x_1-\e x_2)|dx_1dx_2} 
\\
&\lesssim_h &\e^{1-\beta} {\int_0^1 dx_1\int_{\frac{x_1-1}{\e}}^{\frac{x_1}{\e}}[\gamma(\delta^{-1}\e){|x_2|^{-\beta}}\mone_{|\e x_2|>\delta}+|x_2|^{-\beta}\mone_{|\e x_2|<\delta}]  dx_2} 
\\
&\lesssim_\beta &\e^{1-\beta} [\gamma(\delta^{-1}\e) \e^{\beta-1}+(\e^{-1}\delta)^{1-\beta}] = \gamma(\delta^{-1}\e)+\delta^{1-\beta}.
\end{eqnarray*}

Hence if we choose $\e\ll \delta \ll 1$, then
\begin{multline}
{\Big|\int_0^1\int_0^1 \e^{-\beta} \calC(\tfrac{x-y}\e)h( x)h( y)dxdy-\int_0^1\int_0^1 \frac{\bar \calC^{\sign (x-y)}}{|x-y|^{\beta}} h( x)h( y)dxdy\Big|} \
 \lesssim\delta^{1-\beta} +\gamma(\delta^{-1}\e)
\end{multline}
will tend to $0$ as $\epsilon\to0$.

The second inequality comes from the decaying property of $\calC$. In fact, given that $|\calC(x)|\lesssim (1+|x|)^{-\beta}$, if $\beta\neq\frac12$,
\begin{eqnarray*}
\int_0^1\int_0^1 \e^{-2\beta} \calC(\tfrac{x-y}\e)^2|h( x)||h( y)|dxdy & \leq & \|h\|_\infty^2 \int_0^1\int_0^1\frac{dxdy}{(\epsilon+|x-y|)^{2\beta}}
\\
& \lesssim & \int _0^1 \int _{-1} ^1 \frac{dxdy}{(\e +|y|)^{2\beta}}\sim |(1+\epsilon)^{1-2\beta}-\epsilon^{1-2\beta}|.
\end{eqnarray*}
As a result, if $1-2\beta>0$,  the RHS stays bounded as $\epsilon$ tends to $0$.  However, if $1-2\beta<0$, the RHS tends to infinity at the speed of $\e^{1-2\beta}$. Combining these two facts,  for $\beta\neq \frac12$,
\begin{eqnarray*}
\int_0^1\int_0^1 \e^{-\beta} \calC(\tfrac{x-y}\e)^2|h( x)||h( y)|dxdy & \leq & \|h\|_\infty^2 \int_0^1\int_0^1\frac{\e^{\beta}dxdy}{(\epsilon+|x-y|)^{2\beta}}\, \lesssim\, \e^{\beta\land(1-\beta)}.
\end{eqnarray*}
And for the case $\beta=\frac12$, we have 
\begin{eqnarray*}
\int_0^1\int_0^1 \e^{-\beta} \calC(\tfrac{x-y}\e)^2|h( x)||h( y)|dxdy & \leq & \e^{\frac 12}\|h\|_\infty^2 \int_0^1\int_0^1\frac{dxdy}{\epsilon+|x-y|}\, \lesssim \, \epsilon ^{\frac12} |\log \e|.
\end{eqnarray*}

For the case where $\beta=1$, we prove 
\begin{eqnarray*}
\left|\int_0^1\int_0^1 \left(\e|\log\e|\right)^{-1} \calC_\e(\tfrac{x-y}\e)h( x)h( y)dxdy -\bar C_\e \int_0^1 h(x)^2dx\right|& \lesssim & |\log\e|^{-1}
\end{eqnarray*}
where $\bar C_\e$ is a truncated constant defined as 
$$\bar C_\e=\frac1{|\log\e|}\int_{\e|x|<1}\calC(x) dx$$
so that we can conclude by $|C_\e-\bar{\calC}|\le \gamma(\e)$, the asymptotic homogeneity assumption on $\calC$. In fact, we can adapt \eqref{e.cov-conv0} to see that 
\begin{eqnarray*}
& &\pi_\beta^{-1}(\e)\int_0^1\int_0^1 \calC(\tfrac{x-y}\e)h( x)h( y)dxdy -\bar C_\e \int_0^1 h(x)^2dx
\\
& = &|\log\e|^{-1}\Big[\int\int_{\mathbb{R}^2}\calC(x_2)h(x_1)h(x_1-\e x_2)dx_1dx_2
\\
&&\phantom{\frac12\int\int_{\R^2}}-\frac12\int\int_{\R^2}(h(x_1)^2+h(x_1-\e x_2)^2)\mone_{\e|x_2|<1}\calC(x_2) dx_1dx_2\Big]
\\
&=&|\log\e|^{-1}\int\int_{\R^2}(h(x_1)-h(x_1-\e x_2))^2 \calC(x_2)\mone_{\e|x_2|<1}dx_1dx_2,
\end{eqnarray*}
where in the last equality we use the fact that $x_1\in[0,1]$ and $x_1-\e x_2\in[0,1]$ imply $\e |x_2|<1$. Since $\e|x_2|<1$, we have
\begin{eqnarray*}
|h(x_1)-h(x_1-\e x_2)|&\lesssim_h& \mone_{x_1\in[0,1]}(\e\mone_{x_2\in[-\tfrac{x_1}\e,\tfrac{1-x_1}\e]}+\mone_{x_2\in [-\tfrac1\e,-\tfrac{x_1}\e]\cup[\tfrac{1-x_1}\e,\tfrac1\e]}) 
\\
& &+ \mone_{x_1\in[-1,0],x_2\in[-\tfrac{x_1}\e,\tfrac1\e]}+\mone_{x_1\in[1,2],x_2\in[-\tfrac1\e,\tfrac{1-x_1}\e]}
\end{eqnarray*}
Thus by $|\calC(x)|\lesssim (1+|x|)^{-1}$ ,
\begin{eqnarray*}
& &\left|\pi_\beta^{-1}(\e)\int_0^1\int_0^1 \calC(\tfrac{x-y}\e)h( x)h( y)dxdy -\bar C_\e \int_0^1 h(x)^2dx\right|
\\
& \lesssim_h & |\log\e|^{-1} (1+\e|\log\e|)\sim|\log\e|^{-1}.
\end{eqnarray*}
Finally we need to prove 
\begin{eqnarray*}
\int_0^1\int_0^1 \e^{-1} \calC(\tfrac{x-y}\e)^2|h( x)||h( y)|dxdy
&\lesssim & 1.
\end{eqnarray*}
This comes simply from the fact that $\calC(x)\lesssim(1+|x|)^{-1}$ is square-integrable:
\begin{eqnarray*}
&&\int\int_{\R^2} \e^{-1} \calC(\tfrac{x-y}\e)^2|h( x)||h( y)|dxdy
\\
&=&\int\int_{\R^2} \calC(x_2)^2|h( x_1)||h( x_1-\e x_2)|dxdy\\
&\lesssim_h& \int\int_{\R^2}  \calC(x_2)^2dxdy\lesssim 1.
\end{eqnarray*}
So we can conclude
\begin{equation}
|\sigma_\e^2(f,g)-\sigma^2(f,g)|\lesssim
\begin{cases}
\e^{\beta\land (1-\beta)} + \inf_\delta(\delta^{1-\beta} +\gamma (\delta^{-1}\e)) ,&\text{if $0<\beta<1$ and }\beta\neq \frac12,\\
\e^{\frac12}|\log\e| +  \inf_\delta(\delta^{1-\beta} +\gamma (\delta^{-1}\e)) ,&\text{if $\beta=\frac12$},\\
|\log\e|^{-1}+\gamma(\e) ,&\text{if $\beta=1$},\\
\end{cases}
\end{equation}
with 
\begin{equation}
\sigma^2(f,g)=\exp(\calC(0))
\int_0^1\int_0^1 (f(x)-\bar f )( g(x)-\bar g)\frac{\bar \calC^{\sign(x-y)}}{|x-y|^{-\beta}}(f(y)-\bar f )( g(y)-\bar g)dxdy
\end{equation}
for $\beta\in(0,1)$ and 
\begin{equation}
\sigma^2(f,g)=\bar C\exp(\calC(0))
\int_0^1 (f(x)-\bar f)^2( g(x)-\bar g)^2 dx
\end{equation}
while $\beta=1$.

\subsubsection{Proof of Proposition~\ref{prop:NA-non-int}}
We use the same argument as Proposition \ref{prop:NA-int}, with $\beta\le 1$.

First recall the Malliavin derivatives in Proposition \ref{prop:NA-int}
\begin{equation*}  
D_z I_\e(f,g)=\e \frac{1}{a(z)}K_{0,\e}^z(f,g),
\end{equation*}
where $K_{0,\e}^z(f,g)$ reads
\begin{eqnarray*}  
K_{0,\e}^z(f,g)&=&- f(\e z)g(\e z) \mathds{1}_{z\in (0,\frac1\e)}  +f(\e z)  \mathds{1}_{z\in (0,\frac1\e)} \bigg(\fint_0^{\frac1\e} \frac1{a( y)}dy\bigg)^{-1}  \fint_0^{\frac 1\e}\frac{1}{a(y)}g(\e y)dy 
\\
&&+g(\e z)  \mathds{1}_{z\in (0,\frac1\e)} \bigg(\fint_0^{\frac1\e} \frac1{a( y)}dy\bigg)^{-1} \fint_0^{\frac 1\e} \frac1{a(y)}f(\e y)dy
\\
&&- \mathds{1}_{z\in (0,\frac1\e)}\bigg(\fint_0^{\frac1\e} \frac1{a( y)}dy\bigg)^{-2}  \fint_0^{\frac 1\e}\frac{1}{a(y)}g(\e y)dy\fint_0^{\frac 1\e} \frac1{a(y)}f(\e y)dy.
\end{eqnarray*}
And
\begin{equation}  
D^2_{z,z'} I_\e(f,g)=-\e\delta(z-z') \frac{1}{a(z)}K_{0,\e}^z(f,g)
+\e^2 \frac{1}{a(z)a(z')} K_{1,\e}^{z,z'}(f,g),
\end{equation}
where $K_{1,\e}^{z,z'}(f,g)$ is given by  
\begin{eqnarray*}
K_{1,\e}^{z,z'}(f,g)&=&
-2f(\e z)  \mathds{1}_{z\in (0,\frac1\e)} \mathds{1}_{z'\in (0,\frac1\e)} \bigg(\fint_0^{\frac1\e} \frac1{a( y)}dy\bigg)^{-2}  \fint_0^{\frac 1\e}\frac{1}{a(y)}g(\e y)dy 
\\
&&-2 g(\e z)\mathds{1}_{z\in (0,\frac1\e)} \mathds{1}_{z'\in (0,\frac1\e)} \bigg(\fint_0^{\frac1\e} \frac1{a( y)}dy\bigg)^{-2}  \fint_0^{\frac 1\e}\frac{1}{a(y)}f(\e y)dy 
\\
&&+(f(\e z)g(\e z')+f(\e z')g(\e z)) \mathds{1}_{z\in (0,\frac1\e)} \mathds{1}_{z'\in (0,1)} \bigg(\fint_0^{\frac1\e} \frac1{a( y)}dy\bigg)^{-1}
\\
&&+2 \mathds{1}_{z\in (0,\frac1\e)} \mathds{1}_{z'\in (0,\frac1\e)}\bigg(\fint_0^{\frac1\e} \frac1{a( y)}dy\bigg)^{-3}  \fint_0^{\frac 1\e}\frac{1}{a(y)}g(\e y)dy\fint_0^{\frac 1\e} \frac1{a(y)}f(\e y)dy.
\end{eqnarray*}

\medskip

By~\eqref{e.SndPI} we have (as soon as $\sigma_\e(f,g)\ne 0$)
\begin{equation}\label{e.decompX1X2-non-int}
d_{TV}\Big(\frac{I_\e(f,g)-\expec{I_\e(f,g)}}{\pi_\beta(\e) \sigma_\e(f,g)},\Nc\Big)
\,\lesssim\, \pi_\beta(\e)^{-2} \sigma_\e(f,g)^{-2}\,\expec{\3D^2I_\e(f,g)\3_{\beta}^4}^\frac14\expec{\|DI_\e(f,g)\|_\beta^4}^\frac14.
\end{equation}
As in the proof of Theorem~\ref{0:1D-thm-quant2} we have (since $\frac 2 {2-\beta} \le2$ when $\beta\leq 1$)
\begin{equation*}
\expec{\|DI_\e(f,g)\|_\beta^4}^\frac14 \,\lesssim \,  \pi_{\beta}(\epsilon) \|f\|_{L^4(0,1)}\|g\|_{L^4(0,1)},
\end{equation*}
and it remains to estimate the operator norm of the second Malliavin derivative.
For $\beta<1$, recall that 
$$
\3 X \3_\beta = \sup  \Big\{ \Big|\iint_{\R \times \R}    \zeta(x)  \zeta(y) X(x,y)dxdy\Big|\,:\,  \|\zeta\|_{L^{\frac{2}{\beta},\infty}(\R)}=1  \Big\}  .
$$
We shall treat this norm differently for the two RHS terms of~\eqref{0:1D-Mall-dec-fin}.
For the first term $X_1:=-\e\delta(z-z') \frac{1}{a(z)}K_{0,\e}^z(f,g)$, we have
$$
\3 X_1 \3_\beta \,=\, \e \sup  \Big\{ \Big|\int_{\R}    \zeta^2(z) \frac{1}{a(z)}K_{0,\e}^z(f,g) dz\Big|\,:\,  \|\zeta\|_{L^{\frac 2\beta,\infty}(\R)}=1  \Big\}.
$$
We then take local suprema of $\zeta$ and use H\"older' inequality with exponent $q> 1$ to the effect of
\begin{equation*}
\Big|\int_{\R}    \zeta^2(z) \frac{1}{a(z)}K_{0,\e}^z(f,g) dz\Big|
\, \le\, \Big(\int_\R \big( \sup_{B(z)} |\zeta|\big)^\frac{2q}{q-1}dz\Big)^\frac12 \Big(\int_{\R} \frac{1}{a(z)^q}\big(K_{0,\e}^z(f,g)\big)^q dz\Big)^\frac1q.
\end{equation*}
Using that the spaces $L^{p,\infty}(\R)$ are nested (for the same reason as for the discrete $\ell^p(\Z)$ spaces) in form of $\|\zeta\|_{L^{\frac{2q}{q-1},\infty}(\R)}\lesssim \|\zeta\|_{L^{\frac 2\beta,\infty}(\R)} = 1$, this yields, for $\frac{2q}{q-1}\ge\frac 2\beta$,
$$
\3 X_1 \3_\beta \,\lesssim\, \e \|\tfrac{1}{a}K_{0,\e}^z(f,g)\|_{L^q(\R)},
$$
so that, by a suitable use of Lemma~\ref{lem:Minko}, we obtain for some finite constant $C>0$ (independent of $q$)
$$
\expec{\3 X_1 \3_\beta^4}^\frac14 \,\lesssim\, \e \expec{ \|\tfrac{1}{a}K_{0,\e}^z(f,g)\|_{L^q(\R)}^4}^\frac14 \,\lesssim\, \e^{1-\frac1q} \exp(Cq) \|f\|_{L^4(0,1)}\|g\|_{L^4(0,1)}.
$$
Let us now optimize in $q> 1$ by choosing $q=\frac{1}{1-\beta}$, so that
\begin{equation} 
\expec{\3 X_1\3_\beta^4}^\frac14 \,\lesssim\, \e^{\beta} \|f\|_{L^4(0,1)}\|g\|_{L^4(0,1)}.
\end{equation}
We now turn to the second RHS term $X_2:=\e^2 \frac{1}{a(z)a(z')} K_{1,\e}^{z,z'}(f,g)$
 of~\eqref{0:1D-Mall-dec-fin}, for which we have
$$
\3 X_2 \3_\beta = \e^2  \sup  \Big\{ \Big|\iint_{\R \times \R}    \zeta(z)  \zeta(z')  \frac{1}{a(z)a(z')} K_{1,\e}^{z,z'}(f,g)dzdz'\Big|\,:\,  \|\zeta\|_{L^{\frac 2\beta,\infty}(\R)}=1  \Big\} .
$$
By H\"older inequality, this directly implies
$$
\3 X_2 \3_\beta \le \e^2 \Big( \iint_{\R \times \R}   \frac{1}{a(z)^2a(z')^2} (K_{1,\e}^{z,z'}(f,g))^{\frac 2{2-\beta}}dzdz'\Big)^{1-\frac\beta2},
$$
and therefore, using Lemma~\ref{lem:Minko} again,
\begin{equation} 
\expec{\3 X_2 \3_\beta^4}^\frac14 \,\lesssim \, \e^\beta  \|f\|_{L^4(0,1)} \|g\|_{L^4(0,1)} .
\end{equation}
The combination of above estimates allows to turn \eqref{e.decompX1X2-non-int} into the claim~\eqref{e:NA-non-int} (by noticing that every integral in $z$ or $z'$ is taken over $[0,\frac1\e]$, we obtain the factor  $|\log\e|$ while $\beta=1$ by bounding $\omega_c$ with its maximum).

\subsubsection{Quantitative CLT: proof of Theorem~\ref{0:1D-thm-quant3-non-int}}
As in the proof of Theorem \ref{0:1D-thm-quant3} we have (using the same notation)
$$
|\tilde \nu_\e(A)-\mathcal N(A)| \le d_{TV}(\nu_\e,\mathcal N)+d_{TV}(\mathcal N_{\tau_\e},\mathcal N).
$$
So by combining the two propositions above,
$$
d_{TV}\Big(\frac{I_\e(f,g)-\expec{I_\e(f,g)}}{\sqrt\e \sigma(f,g)},\mathcal N \Big) \lesssim \pi_\beta(\e)+\Pi_\beta(\e)
$$
and the theorem follows.

\section{One-dimensional results revisited} \label{0.quant-revisited}

The results established above are not representative of homogenization in general 
in the way they are written. Indeed, not only do their proofs  rely on explicit formulas, but also
their formulations.
The aim of this short section is precisely to reformulate these results in a form that
can be generalized to higher dimensions (although their proofs cannot). 
For higher dimensions, we refer the reader to \cite{GNO-reg,GNO-quant,DGO1,DGO2,duerinckx2019scaling}, which address the case of Gaussian coefficient fields (however uniformly elliptic and bounded). 
Similar results for log-normal coefficients as considered here (however with integrable covariance) are the object of \cite{CGQ}.

\medskip

We start with a reformulation of \eqref{0:1D-6} in Theorem~\ref{0:1D-thm-qual2}
\begin{equation}\label{0:1D-6revisited}
\lim_{\e \downarrow 0} u_\e'(x)-\bar u'(x)(1+\phi'(\tfrac x\e))\,=\,0, \quad \phi'(y):=\bar a (\frac1{a(y)}-\frac1{\bar a}).
\end{equation}
The random field $\phi$ (uniquely defined by choosing $\phi(0)=0$ almost surely)
is what is called the \emph{corrector} (it corrects the values of $\bar u'(x)$ to reconstruct $u'_\e(x)$).
The object $x\mapsto \bar u^{2s}_\e(x):=\bar u(x)+\e \bar u'(x) \phi(\frac x\e)$ is called the two-scale expansion of $u_\e$, and it satisfies $\lim_{\e \downarrow 0} \|\bar u_\e^{2s}-u_\e\|_{H^1(0,1)}=0$ almost surely. Two-scale expansions can be formally extended to arbitrary order in form of $\sum_{j=0}^n \e^j \frac{d^j}{dx^j} \bar u(x) \phi_j(\frac x\e)$, where $\phi_j$ would denote the corrector of order $j$ -- whereas this makes perfect sense for a periodic coefficient $a$, the validity of higher-order two-scale expansions in the random setting strongly depends on dimension (the higher the dimension, the higher-order the two-scale expansion) and on the correlations of $a$ -- see \cite{BLP-78,MR4000840}.

A direct computation shows that $\phi$ satisfies the equation 
$(a(y)(1+\phi'(y))'\,=\,0$ on $\R$, that is, the version for $d=1$ of the set of $d$ equations
\begin{equation}\label{e.corr-eq-gen}
-\nabla \cdot a(y)(\Id+\nabla \phi(y))\,=\,0.
\end{equation}
This \emph{corrector equation} is central to the theory of homogenization in higher dimension.
Whereas in dimension $d=1$, the solution vanishing at zero is explicitly given by $x \mapsto \int_0^x \bar a (\frac1{a(y)}-\frac1{\bar a})dy$, the solvability of \eqref{e.corr-eq-gen} is subtle in higher dimension. The two properties of $\phi$ that one can retain from dimension 1 are: stationarity of the gradient ($\phi'(y)=\bar a (\frac1{a(y)}-\frac1{\bar a})$ and $\frac1a$ is stationary) with finite second moment, and sublinearity at infinity in the sense that $\lim_{\e \downarrow 0} \|\e \phi(\frac x\e)\|_{L^2(0,1)}=0$ almost surely. These two conditions ensure the almost sure well-posedness of \eqref{e.corr-eq-gen} (cf.~\cite{PapaVara,Kozlov-79,JKO94}).

In dimension $d=1$, $\phi'$ is bounded if $a$ is bounded and isolated from zero -- this is not the case for log-normal coefficients, for which boundedness fails and is replaced by the strong stochastic integrability given by Lemma~\ref{lem:Minko}. 
In higher dimension, $\nabla \phi$ is never a bounded field (even for bounded and uniformly elliptic $a$), and the only control we have for ergodic coefficients is $\expec{|\nabla \phi|^2} <\infty$. For  Gaussian coefficients in higher dimensions, corrector gradients also have strong stochastic integrability -- see \cite[Theorems~2 and 3]{GNO-reg}.

In our one-dimensional Gaussian setting, we also have (sharp) estimates for $\phi$ itself: For all $p\ge 1$ and $x\in \R$,
\[
\expec{|\phi(x)|^p}^\frac1p \lesssim \begin{cases}
|x|^{1-\frac{\beta}{2}},&\text{if $0<\beta<1$},\\
\sqrt{|x|} |\log(2+|x|)|^\frac12,&\text{if $\beta=1$},\\
\sqrt{|x|} ,&\text{if $\beta>1$},
\end{cases}
\]
as a consequence of the proof of Theorem~\ref{0:1D-thm-quant1}. These quantitative estimates of the growth of the corrector in the Gaussian setting extend to higher dimensions (the growth then depends both on $\beta$ and on $d$, and for $\beta>2$ and $d>2$ moments of the correctors even remain bounded wrt $x\in \R^d$ -- cf \cite[Theorem~2]{GNO-quant}).
It is interesting to understand why these estimates depend on dimension.
In dimension $d=1$, $\phi(x)=|x| \fint_0^x \bar a (\frac1{a(y)}-\frac1{\bar a})dy$, that is, $\phi(x)$ is $|x|$ times the average between $0$ and $x$ of the random field $y\mapsto \bar a (\frac1{a(y)}-\frac1{\bar a})$ which has vanishing expectation. Since this average vanishes by the ergodic theorem when $|x|\uparrow +\infty$, the bound on the growth of the corrector is due to stochastic cancellations, and can be quantified by how fast averages of the corrector gradient go to zero.
Although the fondamental theorem of calculus is used here to pass from the corrector gradient to the corrector itself, one can replace that step by some use of potential theory in higher dimensions, and the growth of correctors can also be quantified by how fast averages of their gradients go to zero -- which depends on dimension (think of the central limit theorem scaling).

The \emph{homogenized coefficients} $\bar a$ (a $d\times d$ matrix) can be written in terms of the corrector in form of 
\begin{eqnarray}
\bar a &=& \expec{a(0)(\Id+\nabla\phi(0))}\label{e.homfo-corr}
\\
&\stackrel{d=1}{=}&\expec{\tfrac1a}^{-1}.\label{e.homfo-1d}
\end{eqnarray}
In higher dimensions, formula \eqref{e.homfo-corr} remains valid, but \eqref{e.homfo-1d} is only true in dimension 1 (there is no explicit formula for $\bar a$ in general for $d>1$). 
Let $D$ be some bounded domain in $\R^d$ and  $f\in L^2(D)$. For all $\e>0$, consider the unique weak solution in $u_\e \in H^1_0(D)$ of  $-\nabla \cdot a(\frac \cdot \e) \nabla u_\e=f$ and the unique weak solution in $\bar u \in H^1_0(D)$ of  the homogenized problem $-\nabla \cdot \bar a \nabla \bar u=f$.
Qualitative stochastic homogenization, in form of the almost sure weak convergence of $u_\e$ to $\bar u$ in $H^1_0(D)$, was first established in \cite{PapaVara,Kozlov-79,JKO94} under the assumption that $a$ is uniformly bounded and elliptic, stationary and ergodic.
The qualitative two-scale expansion result \eqref{0:1D-6revisited} then holds in form of the almost sure convergence
$\lim_{\e \downarrow 0} \|(\Id+\nabla \phi(\cdot/\e) )\nabla \bar u-\nabla u_\e\|_{L^2(D)}=0$ -- which, in the stationary ergodic setting, follows from a slight modification of the proof of \cite[Proof of Theorem 3 and Proposition 1]{GNO-quant}.
 
In our one-dimensional Gaussian setting, the qualitative two-scale expansion result \eqref{0:1D-6revisited} was made quantitative in Theorem~\ref{0:1D-thm-quant1}.
Such results (which are based on the growth of the corrector) can be extended to higher dimensions for Gaussian coefficients -- see \cite[Theorem~2]{GNO-quant}. 

\medskip

We conclude by revisiting the results on fluctuations, and define the \emph{homogenizaton commutator} (a $d\times d$-tensor constructed
starting from $a$, $\bar a$, and $\nabla \phi$, and introduced in \cite{DGO1} in this context)
\begin{eqnarray}
\Xi(y)&:=&(a(y)-\bar a)(\Id+\nabla \phi(y)) \\
&\stackrel{d=1}{=}& \bar a-\frac{\bar a^2}{a(y)}.\nonumber
\end{eqnarray}
Let us further study this quantity in dimension $d=1$ for $\beta>1$.
For all $\psi\in L^2(0,1)$, define $J_\e(\psi):=\int_0^1 \Xi(\frac x\e) \psi(x)dx$.
Proceeding as in the previous section, we obtain the same scaling as in Theorem~\ref{0:1D-thm-quant2}:
$$
\var{J_\e(\psi)} \,\lesssim\, \e \|\psi\|^2_{L^2(0,1)}.
$$
In higher dimension, we would assume $\beta>d$, and $\e$ would be replaced by $\e^d$ (the square of the CLT scaling -- see \cite[Proposition~1]{DGO2}).
Even more directly than in the proof of Theorem~\ref{0:1D-thm-quant3}, we also get
\begin{equation}\label{0:eq-pathwise4}
d_{TV}\Big(\frac{J_\e(\psi)}{\sqrt\e \tilde \sigma(\psi)},\mathcal N \Big) \,\lesssim\, \sqrt \e \exp(C|\log \e|^\frac12)+ \e^{\beta-1},
\end{equation}
(note that $\expec{J_\e(\psi)}=0$) where the limiting variance is given this time by
\begin{equation} 
\tilde \sigma^2(\psi):= \bar a^4 \calQ  \int_0^1 \psi^2.
\end{equation}
In higher dimension, one has a result of a similar form (see \cite[Theorem~1]{duerinckx2019scaling}).
The last observation is the following identity
\begin{multline}\label{0:eq-pathwise1}
(a(\tfrac x\e)-\bar a) u'_\e(x)- (a(\tfrac x\e)-\bar a)(1+\phi'(\tfrac x\e)) \bar u'(x)
\\
=\,(1-\frac{\bar a}{a(\tfrac x\e)})\Big( \big(\int_0^1 \frac1{a(\tfrac y\e)}dy\big)^{-1}\int_0^1 \frac1{a(\tfrac y\e)}f(y)dy-\int_0^1 f(y)dy \Big).
\end{multline}
Notice that the left-hand side can be interpreted as the homogenization commutator of the solution minus its two-scale expansion using the standard homogenization commutator -- in the spirit of \eqref{0:1D-6revisited}. 
We shall prove that, when integrated against a test-function, the left-hand side is smaller than the fluctuation scaling.   
Set $\bar g:=\int_0^1 g$, and define $\bar v$ as the unique solution of 
$$
\bar a \bar v''=g' \text{ on }(0,1), \ \bar v(0)=\bar v(1)=0.
$$
On the one hand, by definition of $J_\e$ and since $\bar v'=\frac1{\bar a}(g-\bar g)$,
\begin{equation}
\frac1{\bar a}\int_0^1 (a(\tfrac x\e)-\bar a)(1+\phi'(\tfrac x\e)) \bar u'(x)(g(x)-\bar g)dx = J_\e(\bar u' \bar v'),
\end{equation}
and by definition of $I_\e$, integrating equation \eqref{0:1D-1}, setting $f(x)=\int_0^x f'(y)dy$, $\bar f= \int_0^1 f$, and recalling
the value \eqref{0:1defC1} of the integration constant $C^1_\e$ 
\begin{eqnarray}
\frac1{\bar a}\int_0^1 (a(\tfrac x\e)-\bar a) u'_\e(x)(g(x)-\bar g)&=& \frac1{\bar a}\int_0^1 (f(x)+C^1_\e)(g(x)-\bar g)dx-I_\e(f,g-\bar g) \nonumber
\\
&=&\frac1{\bar a}\int_0^1 (f(x)-\bar f)(g(x)-\bar g)dx- I_\e(f,g), \label{0:eq-pathwise2}
\end{eqnarray}
since $I_\e(f,g-\bar g)=\int_0^1 u_\e'(g-\bar g)=I_\e(f,g)+\bar g \int_0^1 u_\e'=I_\e(f,g)$ using that $u_\e(0)=u_\e(1)=0$.
On the other hand, the random variable obtained by integrating the RHS of \eqref{0:eq-pathwise1} against $\frac1{\bar a}(g-\bar g)=\bar v'$,
that is,
\begin{equation*}
K_\e(f,g):=\Big( \big(\int_0^1 \frac1{a(\tfrac y\e)}dy\big)^{-1}\int_0^1 \frac1{a(\tfrac y\e)}f(y)dy-\int_0^1 f(y)dy \Big) \int_0^1 (\frac1{\bar a}-\frac{1}{a(\tfrac x\e)})(g(x)-\bar g)dx
\end{equation*}
satisfies (using a similar argument as in the proof of Theorem~\ref{0:1D-thm-quant2})
\begin{equation}\label{0:eq-pathwise3}
\var{K_\e(f,g)} \,\lesssim \,\e^2 \|f\|_{L^4(0,1)}\|g\|_{L^4(0,1)}.
\end{equation}
Note that the scaling in $\e$ of \eqref{0:eq-pathwise3} is one order higher than the scaling of  Theorem~\ref{0:1D-thm-quant2}.
In particular, \eqref{0:eq-pathwise1}--\eqref{0:eq-pathwise3} imply that 
\begin{equation}\label{0:eq-pathwise5}
\expec{|\e^{-\frac12}I_\e(f,g)-\e^{-\frac12}J_\e(\bar u' \bar v')|^2}^\frac12 \,\lesssim\, \sqrt{\e},
\end{equation}
whereas \eqref{0:eq-pathwise4} yields
\begin{equation}\label{0:eq-pathwise6}
d_{TV}\Big(\frac{J_\e(\bar u' \bar v')}{\sqrt\e \tilde \sigma(\bar u' \bar v')},\mathcal N \Big) \,\lesssim\, \sqrt \e \exp(C|\log \e|^\frac12)+ \e^{\beta-1},
\quad \tilde \sigma^2(\bar u' \bar v')= \bar a^4 \calQ \int_0^1 \bar u'^2 \bar v'^2.
\end{equation}
Let us reformulate the variance using the explicit form \eqref{0:1D-3} of $\bar u'=\frac1{\bar a}(f- \bar f)$ and $\bar v'=\frac1{\bar a}(g-\bar g)$:
\begin{equation*}
\tilde \sigma^2(\bar u' \bar v')\,=\, \bar a^4 \calQ \int_0^1 \bar u'^2 \bar v'^2
\,=\,  \calQ \int_0^1 (f-\bar f)^2 (g-\bar g)^2\,=\, \sigma^2(f,g),
\end{equation*}
as defined in Theorem~\ref{0:1D-thm-quant3}.
The combination \eqref{0:eq-pathwise5} and \eqref{0:eq-pathwise6} allows to obtain a (slightly weaker) variant of Theorem~\ref{0:1D-thm-quant3},
where the total variation distance is replaced by a distance on probability measures that is controlled by the $L^1$-norm in probability,
e.g.~the 1-Wasserstein distance:
\begin{eqnarray*}
d_{W}\Big(\frac{I_\e(f,g)}{\sqrt\e \sigma(f,g)},\mathcal N \Big) & \le& d_{W}\Big(\frac{J_\e(\bar u'\bar v')}{\sqrt\e \sigma(f,g)},\mathcal N \Big)+\frac1{\sqrt{\e}\sigma(f,g)} \expec{|I_\e(f,g)-J_\e(\bar u'\bar v')|}
\\
&\lesssim& \sqrt{\e},
\end{eqnarray*}
provided $\calQ>0$.

\medskip

Estimate~\eqref{0:eq-pathwise5} illustrates the \emph{pathwise structure of fluctuations}: The quantity $I_\e(f,g)-J_\e(\bar u'\bar v')$ is small in a strong norm in probability (the second moment, as opposed to
weak convergence of measures) in the scaling of the fluctuations (that is, after multiplying by $\sqrt{\e}^{-1}$, this difference is still small).
This implies that the fluctuations of $\e^{-\frac12} I_\e(f,g)$ are described at leading order by the fluctuations of the commutator $\e^{-\frac12}J_\e(\bar u' \bar v')$, a simpler quantity (it only depends on $a$, the corrector, 
and the solution of the homogenized equation). These results extend to higher dimensions, see \cite[Theorem~1]{DGO2}.
Estimate~\eqref{0:eq-pathwise6} constitutes a quantitative CLT for the homogenization commutator, which is actually easier to establish than the corresponding result for $I_\e(f,g)$: The commutator is a more local function of $a$ (in this one-dimensional setting, it is even exactly local: $\Xi(x)=\bar a- \frac{\bar a^2}{a(x)}$ --- compare to \eqref{0:1D-16}). In higher dimension, this result corresponds to \cite[Theorem~1]{duerinckx2019scaling}.

\section*{Acknowledgements}
The authors acknowledge financial support from the  European Research Council (ERC) under the European Union's Horizon 2020 research and innovation programme (Grant Agreement n$^\circ$~864066).

%\bibliographystyle{abbrv}
%\bibliography{biblio}

\def\cprime{$'$}

\end{document}